\def\to{{\rightarrow}}
\newcommand{\COMP}{\hbox{C\kern -.58em {\raise .54ex \hbox{$\scriptscriptstyle |$}}
\kern-.55em {\raise .53ex \hbox{$\scriptscriptstyle |$}} }}
\newcommand{\MM}{\hbox{I\kern-.2em\hbox{M}}}
\newcommand{\NN}{\hbox{I\kern-.2em\hbox{N}}}
\newcommand{\RR}{\hbox{I\kern-.2em\hbox{R}}}
\newcommand{\sRR}{{\it \hbox{I\kern-.2em\hbox{R}}}}
\newcommand{\QQ}{\hbox{I\kern-.53em\hbox{Q}}}
\newcommand{\PP}{\hbox{I\kern-.53em\hbox{P}}}
\newcommand{\EE}{\hbox{I\kern-.53em\hbox{E}}}
\newcommand{\ZZ}{{{\rm Z}\kern-.28em{\rm Z}}}
\newcommand{\be}{\begin{equation}}
\newcommand{\ee}{\end{equation}}
\newtheorem{theor}{Theorem}[section]
\newtheorem{prop}{Proposition}[section]
\newtheorem{remar}{Remark}[section]
\newtheorem{defin}{Definition}[section]
\newcommand{\1}{\mbox{1\hspace{-1mm}I}}
\title{Band Control of Mutual Proportional Reinsurance}
\author{John Liu 
\footnote{This work was supported by Hong Kong GRF 5225/07E.}\\
College of Business, City University of Hong Kong, Hong
Kong\\
Michael Taksar
\footnote{This work was supported by the Norwegian Research Council:
Forskerprosjekt ES445026, ``Stochastic Dynamics of Financial Markets.''}\\
Department of Mathematics, University of Missouri, Columbia, Missouri\\
Jiguang Yuan\\
ODDO Options, Hong Kong}
\begin{document}

\title{Band Control of Mutual Proportional Reinsurance}
\maketitle
\begin{abstract}
In this paper, we investigate the optimization of mutual proportional
reinsurance --- a mutual reserve system that is intended for the collective
reinsurance needs of homogeneous mutual members, such as P\&I Clubs
in marine mutual insurance and reserve banks in the U.S. Federal Reserve.
Compared to general (non-mutual) insurance models, which involve one-sided
impulse control (i.e., either downside or upside impulse) of the underlying
insurance reserve process that is required to be positive, a mutual
insurance differs in allowing two-sided impulse control (i.e., both
downside and upside impulse), coupled with the classical proportional
control of reinsurance. We prove that a special band-type impulse
control $(a,A,B,b)$ with $a=0$ and $a<A<B<b$, coupled with a proportional
reinsurance policy (classical control), is optimal when the objective
is to minimize the total maintenance cost. That is, when the reserve
position reaches a lower boundary of $a=0$, the reserve should immediately
be raised to level $A$; when the reserve reaches an upper boundary
of $b$, it should immediately be reduced to a level $B$.

An interesting finding produced by the study reported in this paper
is that there exists a situation such that if the upside fixed cost
is relatively large in comparison to a finite threshold, then the
optimal band control is reduced to a downside only (i.e., dividend
payment only) control in the form of $(0,0;B,b)$ with $a=A=0$. In
this case, it is optimal for the mutual insurance firm to go bankrupt
as soon as its reserve level reaches zero, rather than to jump restart
by calling for additional contingent funds. This finding partially
explains why many mutual insurance companies, that were once quite
popular in the financial markets, are either disappeared or converted
to non-mutual ones. 
\end{abstract}

\section{Introduction}

Reinsurance has been long investigated as an intrinsic part of commercial
insurance, of which the mainstream modeling framework is profit maximization
with the one-sided impulse control of an underlying reserve process.
There are two types of one-sided impulse control: downside-only impulse
control (such as a dividend payment) with a fixed cost $K^{-}$(e.g.,
Cadenillas \emph{et al.} \cite{Cadenillas2000-}, Hojgaard and Taksar
\cite{hojgaard2004optimal}) and upside-only impulse control (such
as inventory ordering) with a fixed cost $K^{+}$ (e.g., Bensoussan
\emph{et al.} \cite{Bensoussan2005-}, Eisenberg and Schmidli \cite{Eisenberg2010},
Sulem \cite{Sulem1986-125}). In this paper, we examine mutual proportional
reinsurance --- a mutual reserve system that is intended for the collective
reinsurance needs of homogeneous mutual members, such as the P\&I
Clubs in marine mutual insurance (e.g., Yuan \cite{Yuan2008}) and
the reserve banks in the U.S. Federal Reserve (e.g., Dawande \emph{et
al. }\cite{Dawande 2010}). A mutual insurance differs from a general
(non-mutual) insurance in two key dimensions: 1) a mutual system is
not for profit and 2) a mutual reserve involves two-sided impulse
control (i.e., both a dividend refund as a downside impulse to decrease
the reserve with cost $K^{-}$ and a call for funds as an upside impulse
to increase the reserve with cost $K^{+}$). It should be noted that
the reserve process for a general insurance must always be positive
(above zero), and the insurance firm is considered bankrupt as soon
as its reserve falls to zero.

The mutual proportional reinsurance model developed in this paper
is a generalization of the proportional reinsurance models (e.g.,
Cadenillas \emph{et al. }\cite{Cadenillas2000-}, Hojgaard and Taksar
\cite{hojgaard2004optimal}, Eisenberg and Schmidli \cite{Eisenberg2010})
and is modified with the two differing characteristics noted above.
More specifically, the proportional reinsurance rate can be adjusted
in continuous time, and the underlying mutual reserve process is regulated
by a two-sided impulse control in terms of a contingent dividend payment
(i.e., a downside impulse control to decrease the mutual reserve level)
and contingent call for contributions (i.e., an upside impulse control
to increase the mutual reserve level). The corresponding mathematical
problem for mutual proportional reinsurance becomes a two-sided impulse
control system combined with a classical rate control in continuous
time, a problem yet to be posed in insurance research. A problem that
involves a mix of impulse control and classical control is termed
a hybrid control problem in control theory, of which the diculty has
been well noted (e.g., Bensoussan\emph{ }and Menaldi \cite{Bensoussan2000-261},
Branicky and Mitter \cite{Branicky1995-Algorithms}, Abate \emph{et
al.} \cite{Abate2005-Stochastic}).

A pure two-sided impulse control problem (i.e., without a classical
rate control) was investigated by Constantinides \cite{Constantinides1976-1320}
in the form of cash management. Constantinides and Richard \cite{Constantinides1978-620}
showed an optimal two-sided impulse control policy to exist in the
form of a band control, denoted with four parameters as $(a,A;B,b)$
with $a<A\leq B<b$. In other words, when the reserve position reaches
a lower boundary $a$, then the reserve should immediately be raised
to level $A$; when the reserve reaches upper boundary $b$, it should
immediately be reduced to level $B$. For our mutual proportional
reinsurance problem, we specify the corresponding Hamilton-Jacobi-Bellman
(HJB) equation and the associated quasi-variational inequalities (QVI),
from which we analytically solve the optimal value function. We then
prove that a special band-type impulse control $\left(0,A,B,b\right)$
with $a=0$, combined with a proportional reinsurance policy (classical
control), is optimal when the objective is to minimize the total maintenance
cost. An interesting finding reported here is that there exists a
situation such that if the upside fixed cost $K^{+}$ is relatively
large in comparison to a finite threshold $\overline{K^{+}}$, then
the optimal band control is reduced to a downside only (i.e., a dividend
payment only) control in the form of $(0,0;B,b)$ with $a=A=0$. In
this case, it is optimal for the mutual insurance to go bankrupt as
soon as its reserve level falls to zero, rather than to restart by
calling for additional contingent funds. This finding partially explains
why many mutual insurance companies, that were once quite popular
in the financial markets, are either disappeared or converted to non-mutual
ones.

The remainder of the paper is organized as follows. In Section 2,
we formulate the mathematical model and specify the HJB equation and
the QVI of the corresponding stochastic control problem. We solve
the QVI for the optimal value function in Section 3. In Section 3.2,
we characterize and analyze the threshold $\overline{K^{+}}$. In
Section 4, we prove the verification theorem and verify the optimal
control. Finally, we make concluding remarks in Section 5.

\section{The Model}

\subsection{Feasible Control}

The classical Cramer-Lundberg model of an insurance reserve (surplus)
is described via a compound Poisson process: 
\[
D(t)=D(0)+pt-\sum_{i=0}^{N(t)}Y_{i},
\]
 where $D(t)$ is the amount of the surplus available at time $t$,
quantity $p$ represents the premium rate, $N(t)$ is the Poisson
process of incoming claims and $Y_{i}$ is the size of the $i$th
claim. This surplus process can be approximated by a diffusion process
with drift $\mu=p-\lambda E[Y]$ and diffusion coefficient $\sigma=\sqrt{\lambda EY^{2}}$,
where $\lambda$ is the intensity of the Poisson process $N(t)$.
We assume that the insurer always sets $p>\lambda E[Y]$ (i.e. $\mu>0$).
Thus, with no control, the reserve process $X(t)$ is described by
\begin{equation}
X\left(t\right)=X\left(0\right)+\int_{0}^{t}\mu ds+\int_{0}^{t}\sigma dW_{s},\label{dynamics}
\end{equation}
 where $W_{t}$ is a standard Brownian motion.

We start with a probability space $(\Omega,\mathcal{F},P)$, that
is endowed with \textit{information filtration } $\mathcal{F}_{t}$
and a standard Brownian motion $W_{t}$ on $\Omega$ adapted to $\mathcal{F}_{t}$.
Two types of controls are used in this model. The first is related
to the ability to directly control its reserve by raising cash from
or making refunds to members at any particular time. The second is
related to the mutual insurance firm's ability to delegate all or
part of its risk to a reinsurance company, simultaneously reducing
the incoming premium (all or part of which is in this case channeled
to the reinsurance company). In this model, we consider a \textit{proportional
reinsurance} scheme. This type of scheme corresponds to the original
insurer paying $u$ fraction of the original claim. The premium rate
coming to the original insurer is simultaneously reduced by the same
fraction. The reinsurance rate can can be chosen dynamically depending
on the situation.

Mathematically, control $U$ takes a triple form: 
\begin{equation}
U=\left\{ u\left(t\right),\left(\tau_{1},\tau_{2},...\right),\left(\xi_{1},\xi_{2}...\right)\right\} ,\label{contU}
\end{equation}
 where $0\le u(t)\le1$ is a predictable process with respect to ${\mathcal{F}}_{t}$,
the random variables $\tau_{1}<\tau_{2}<...$ constitute an increasing
sequence of stopping times with respect to ${\mathcal{F}}_{t}$, and
$\xi_{i}$ is a sequence of ${\mathcal{F}}_{\tau_{i}}$-measurable
random variables, $i=1,2,...$.

The meaning of these controls is as follows. The quantity $u(t)$
represents the fraction of the claim that the mutual insurance scheme
pays if the claim arrives at time $t$. Suppose that $u(t)=u$ is
chosen at time $t$. Then, in the diffusion approximation (\ref{dynamics}),
drift $\mu$ and diffusion coefficient $\sigma$ are reduced by factor
$u$ (see Cadenillas \emph{et at.} \cite{Cadenillas2006-181}, Hojgaard
and Taksar \cite{hojgaard2004optimal}).

The fact that the process $u(t)$ is adapted to information filtration
means that any decision has to be made on the basis of past rather
than the future information. The stopping times $\tau_{i}$ represent
the times when the $i$th intervention to change the reserve level
is made. If $\xi_{i}>0$, then the decision is to raise cash by calling
the members/clients. If $\xi_{i}<0$, then the decision is to make
a refund. The fact that $\tau_{i}$ is a stopping time and $\xi_{i}$
is ${\mathcal{F}}_{\tau_{i}}$-measurable also indicates that the
decisions concerning when to make a contingent call and how much cash
to raise are made on the basis of only past information. The same
applies to the refund decisions.

Once control $U$ is chosen, the dynamics of the reserve process becomes:
\begin{equation}
X\left(t\right)=X\left(0\right)+\int_{0}^{t}u(s)\mu ds+\int_{0}^{t}u(s)\sigma dW_{s}+\sum_{\tau_{i}\le t}\xi_{i}.\label{dynamics_control}
\end{equation}
 Define the ruin time as 
\begin{equation}
\tau=\inf\{t:X(t)<0\}.\label{tau}
\end{equation}
 Control $U$ is called admissible for initial position $x$ if, for
$X(0)=x$ for any $\tau_{i}\le\tau$, 
\begin{equation}
X(\tau_{i})\ge0;\label{positivity}
\end{equation}
 and if 
\begin{equation}
E\sum_{\tau_{i}\le\tau}e^{-r\tau_{i}}|\xi_{i}|(K^{-}\1_{\xi_{i}<0}+K^{+}\1_{\xi_{i}>0})<\infty.\label{admis}
\end{equation}
 We denote the set of all admissible controls by $\mathcal{U}$.

The meaning of admissibility is as follows. At any time the decision
to make a refund is made, the refund amount cannot exceed the available
reserve. As can be seen in the following, if this condition is not
satisfied, then one can always achieve a cost equal to $-\infty$,
simply by making an infinitely large refund. The second condition
of admissibility is a rather natural technical condition of integrability.

\subsection{Cost Structure and Value Function}

The objective in this model is to minimize the operational cost and
the lost opportunity to invest the money in the market. Cost function
$g$ is defined as 
\begin{equation}
g\left(\xi\right)=K^{+}\1_{\xi>0}+c^{+}\xi^{+}+K^{-}\1_{\xi<0}-c^{-}\xi^{-}.\label{eq:Def-g}
\end{equation}
 Here, $\xi^{+}$ and $\xi^{-}$ denote the positive and negative
components of $\xi$, that is, $\xi^{+}=\max(\xi,0)$ and $\xi^{-}=-\min(\xi,0)$.
The costs associated with refunds are of a different nature. A contingent
call always increases the total cost, whereas a refund decreases it.
However fixed set-up costs $K^{+}>0$ and $K^{-}>0$ are incurred
regardless of of the size of a contingent call or a refund. In addition,
when the call is made and the cash is raised, there is a proportional
cost associated with the amount raised. The constant $c^{+}>1$ represents
the amount of cash that needs to be raised in order for one dollar
to be added to the reserve. If the reserve is used for a refund, then
a part of it may be charged as tax. The constant $0<c^{-}<1$ represents
the amount actually received by the shareholders for each dollar taken
from the reserve.

Given a discount rate $r$, the cost functional associated with the control
$U$ is defined as 
\begin{equation}
C\left(x;U\right)=E_{x}\left\{ \sum_{i=1}^{\infty}g\left(\xi_{i}\right)e^{-r\tau_{i}}\1_{\tau_{i}\le\tau}\right\} .\label{cost-obj3}
\end{equation}
 The objective is to find the value function, 
\begin{equation}
V\left(x\right)=\inf_{U\in\mathcal{U}}C\left(x;U\right),\label{value-funct}
\end{equation}
 and \textit{optimal control} $U^{*}$, such that 
\[
C(x,U^{*})=V(x).
\]

\subsection{Variational Inequalities for the Optimal Value Function\label{sec:QVI}}

For each $0\le u\le1$, define the infinitesimal generator $\mathcal{L}^{u}$.
For any twice continuously differentiable 
function $\phi:\left[0,\infty\right)\mapsto\mathbb{R}$ 
\begin{equation}
\left(\mathcal{L}^{u}\phi\right)\left(x\right)=\frac{1}{2}u^{2}\sigma^{2}\frac{d^{2}\phi\left(x\right)}{dx^{2}}+u\mu\frac{d\phi\left(x\right)}{dx}\label{eq:Lu}.
\end{equation}
 Let $M$ be the \textit{inf-convolution operator}, defined as 
\begin{equation}
\mathcal{M}\phi\left(x\right)=\underset{\xi\neq0}{\inf}\left[g\left(\xi\right)+\phi\left(x+\xi\right)\right].\label{eq:M_oper}
\end{equation}

\begin{defin} The QVI of the control problem are 
\begin{equation}
\mathcal{L}^{u}V-rV\ge 0,\label{eq:QVI 1}
\end{equation}
and
\begin{equation}
\mathcal{M}V\geq V,\label{eq:QVI 2}
\end{equation}
 together with the tightness condition 
\begin{equation}
\left(\mathcal{M}V-V\right)\left(\underset{u\in\left[0,1\right]}{\min}(\mathcal{L}^{u}V-rV)\right)=0.\label{eq:QVI 3}
\end{equation}
 \end{defin}

\section{Solution of the QVI}

\subsection{The HJB Equation in the Continuation Region}

In this model, the application of the control that is related to calls
and refunds results in a jump in the reserve process. This type of
model is considered in the framework of the so-called\textit{impulse control}.
Because we also have a control whose application changes the drift
and the diffusion coefficient of the controlled process, the resulting
mathematical problem becomes a mixed regular-impulse control problem
(e.g., Cadenillas \emph{et al.} \cite{Cadenillas2006-181}). In the
case of a pure impulse control, the optimal policy is of the $(a,A,B,b)$
type, where the four parameters used to construct the optimal control
must be computed as a part of a solution to the problem 
(see Cadenillas and Zapatero \cite{Cadenillas1999-218},
Constantinides and Richard \cite{Constantinides1978-620}, Harrison
and Taylor \cite{Harrison1983-454}, and Paulsen \cite{Paulsen2008}).
Parameters $a$ and $b$ represent the levels at which the intervention
(application of impulses) must be made, whereas $A$ and $B$ stand
for the positions that the controlled process must be in after the
intervention is made. This is a so-called \textit{band-type} policy,
with $(a,A)$ and $(B,b)$ understood as the two bands that determine
the nature of the optimal control. The interval $[a,b]$ is called
the \textit{continuation region}. When the process falls inside the
continuation region, no interventions/impulses are applied. When an
intervention is initiated, the time when the process reaches one of
the boundaries of the continuation region corresponds to one of $\tau_{i}$.

We conjecture that, in our case, the optimal intervention (impulse
control) component of the problem is also of the band type. Moreover,
as the following analysis implicitly shows, we can narrow our search
for the optimal policy to a special band-type control $\left(0,A,B,b\right)$,
where the level $a$ associated with the contingent calls is set to
zero. Therefore, only three of the four band-type policy parameters
remain unknown. After finding these parameters (and determining the
optimal drift/diffusion control in the continuation region), we will
see that the cost function associated with this policy satisfies the
QVI.

The derivation of the value function is similar to \cite{Cadenillas2000-}
and \cite{Cadenillas2006-181} . Suppose that $V\left(x\right)$ satisfies
all of the QVI conditions: \eqref{eq:QVI 1}, \eqref{eq:QVI 2} and
\eqref{eq:QVI 3}. First note that the function $V(x)$ is a decreasing
function of $x$, and thus $V'\le0$. To satisfy \eqref{eq:QVI 3},
for any $x\geq0$, at least one of the two functions on the left side
of the equation should be equal to zero. We conjecture that the value
function has the following structure.

\begin{equation}
V-\mathcal{M}V=0\label{eq:HJB intervention}
\end{equation}
 for $x\in0\cup[b,+\infty)$. Also 
\begin{equation}
\underset{u\in\left[0,1\right]}{\min}(\mathcal{L}^{u}V-rV)=0\label{eq:HJB continuous}
\end{equation}
 for $x\in\left(0,b\right)$.

Assume that $u^{*}\in\left[0,1\right]$ minimizes the function $\mathcal{L}^{u}V-rV$
in foregoing equation. If $V''>0$ then 
\begin{equation}
u^{*}=-\frac{\mu V^{\prime}}{\sigma^{2}V^{\prime\prime}},\label{eq:u_first_order}
\end{equation}
 provided that the right-hand side of (\ref{eq:u_first_order}) belongs
to $(0,1)$. (Note that if $V''(x)=0$, then (\ref{eq:HJB continuous})
cannot be satisfied and we exclude $V''(x)=0$ from consideration.)

Substituting \eqref{eq:u_first_order} into \eqref{eq:HJB continuous},
we get 
\begin{equation}
2r\sigma^{2}V_{1}V_{1}^{\prime\prime}+\mu^{2}\left(V_{1}^{\prime}\right)^{2}=0.\label{eq:HJB 1}
\end{equation}
 The general solution for \eqref{eq:HJB 1} is 
\begin{equation}
V_{1}\left(x\right)=-C_{1}\left(x+C_{2}\right)^{\gamma},\label{eq:General solution for HJB 1}
\end{equation}
 where $C_{1}$ and $C_{2}$ are free constants to be determined later,
and 
\begin{equation}
\gamma=\frac{1}{1+\frac{\mu^{2}}{2r\sigma^{2}}}.\label{gammadef}
\end{equation}
 It is easy to see that $0<\gamma<1$. From \eqref{eq:u_first_order},
we obtain the expression for $u^{*}(x)$ (provided that $C_{1}>0,$
which will be verified later): 
\begin{equation}
u^{*}\left(x\right)=\frac{\mu\left(x+C_{2}\right)}{\sigma^{2}\left(1-\gamma\right)}.\label{eq:u_star(x)}
\end{equation}
 Note that the solution of \eqref{eq:HJB 1} coincides with the solution
of (\ref{eq:HJB continuous}) only in the region where 
\[
0<\frac{\mu}{\sigma^{2}\left(1-\gamma\right)}\left(x+C_{2}\right)<1.
\]
 From this expression, we conjecture that there is a switching point
$x_{0}$ such that $u\left(x\right)=1$ when $x>x_{0}$. As $u^{*}\left(x_{0}\right)=1$,
by virtue of the equation \eqref{eq:u_star(x)}, we obtain the following
expression for $x_{0}$: 
\begin{equation}
x_{0}=\frac{\sigma^{2}\left(1-\gamma\right)}{\mu}-C_{2}.\label{eq:x_0-C_2}
\end{equation}
 For $x>x_{0}$, $u^{*}\left(x\right)=1$; and the corresponding differential
equation becomes 
\begin{equation}
\frac{1}{2}\sigma^{2}V_{2}^{\prime\prime}+\mu V_{2}^{\prime}-rV_{2}=0.\label{eq:HJB_2}
\end{equation}
 The general solution for \eqref{eq:HJB_2} is 
\begin{equation}
V_{2}\left(x\right)=C_{3}e^{\rho_{1}\left(x-x_{0}\right)}+C_{4}e^{-\rho_{2}\left(x-x_{0}\right)},\label{eq:General solution for HJB 2}
\end{equation}
 where 
\begin{align}
\rho_{1}=\frac{\sqrt{\mu^{2}+2r\sigma^{2}}-\mu}{\sigma^{2}}\label{rho1}\\
\rho_{2}=\frac{\sqrt{\mu^{2}+2r\sigma^{2}}+\mu}{\sigma^{2}},\label{rho2}
\end{align}
 with $0<\rho_{1}<\rho_{2}$. Standard arguments show that 
\begin{equation}
V'(x)=-c^{-}\label{Vb}
\end{equation}
 %for $x\ge b$. (e.g., see .....)
for $x\ge b$ (see e.g., Cadenillas \emph{et al.} \cite{Cadenillas2006-181}).
The boundary conditions for the equation are rather tricky. If $0$
and $b$ are the points at which the impulse control (intervention)
is initiated then the boundary conditions at these points become 
\begin{align}
V\left(0\right)=V\left(A\right)+K^{+}+c^{+}A\label{Vl0}\\
V^{\prime}\left(A\right)=-c^{+}\label{VprA}\\
V\left(b\right)=V\left(B\right)+K^{-}-c^{-}\left(b-B\right)\label{Vprb}\\
V^{\prime}\left(B\right)=-c^{-}.\label{VprB}
\end{align}
 However, if bankruptcy is allowed and no intervention is initiated
when the process reaches $0$, then the boundary condition at 0 becomes
straightforward: $V(0)=0$ (see Cadenillas and Zapatero \cite{Cadenillas2000-}
and Cadenillas, \emph{et al.} \cite{Cadenillas2006-181}). In our
case, whether $0$ is the point that corresponds to the intervention
in the form of a contingent call or whether it corresponds to bankruptcy
is not given a priori; rather it is part of the solution to the problem.

We seek the solution by finding  a function $V$ such that 
\begin{align}
V(x)=V_{1}(x),0\le x\le x_{0},\label{V0x0}\\
V(x)=V_{2}(x),x\ge x_{0}\le b,\mbox{ and}\label{Vx0b}\\
V(x)=V(b)-c^{-}(x-b),x\ge b.\label{Vgrb}
\end{align}
 To find the free constants in the expressions for $V_{1}$ an $V_{2}$
and to paste different pieces of the solution together we apply the
\textit{principle of smooth fit} by making the value and the first
derivatives to be continuous at the switching points $x_{0}$ and
$b$, 
\begin{align}
V_{1}(x_{0})=V_{2}\left(x_{0}\right),\label{V1V2}\\
V_{1}^{\prime}\left(x_{0}\right)=V_{2}(x_{0})^{\prime},\label{V1pV2p}\\
V_{2}^{\prime}\left(b\right)=-c^{-},\label{V2pb}
\end{align}
 where $x_{0}$ is defined by \eqref{eq:x_0-C_2}.

(It should be noted that the function $V$, which is constructed from
(\ref{V0x0})-(\ref{Vgrb}) subject to conditions (\ref{Vl0})-(\ref{VprB})
and (\ref{V1V2})-(\ref{V2pb}), corresponds to the case in which
the optimal policy leads to $\tau=\infty$.) We begin by constructing
such a function. The main technique is not to consider the function
$V$ itself, but rather first to construct $V'$.

The form of $V'\left(x\right)$ is shown in Figure \ref{fig:V'(x)}.

\begin{figure}[h]
 \caption{Optimal Policy Parameters}

\label{fig:V'(x)}

\centering{}\includegraphics[scale=0.5]{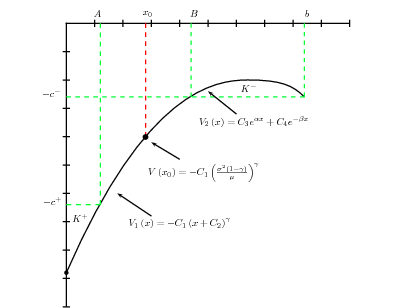} 
\end{figure}

From $u^{*}\left(x_{0}\right)=1$ and \eqref{eq:u_first_order}, we
have $V_{1}^{\prime\prime}\left(x_{0}\right)=-\frac{\mu}{\sigma^{2}}V_{1}^{\prime}\left(x_{0}\right)$.
By the continuity on $V$ and $V'$ at $x_{0}$, and by \eqref{eq:HJB_2},
we have $\frac{\mu}{2r}V_{2}^{\prime}\left(x_{0}\right)=V_{2}\left(x_{0}\right)$.
From this relation and \eqref{eq:General solution for HJB 2}, we
have 
\[
C_{4}=\frac{\rho_{1}\frac{\mu}{2r}-1}{\rho_{2}\frac{\mu}{2r}+1}C_{3}.
\]

Let $\beta=\frac{\rho_{1}\cdot\frac{\mu}{2r}-1}{\rho_{2}\cdot\frac{\mu}{2r}+1}$.
Then, $C_{4}=\beta C_{3}$, and we can write 
\[
V_{2}\left(x;C_{3}\right)=C_{3}e^{\rho_{1}\left(x-x_{0}\right)}+\beta C_{3}e^{-\rho_{2}\left(x-x_{0}\right)}.
\]
We can easily get the inequalities: 
\begin{equation}
-1<\beta<0.\label{beta}
\end{equation}
 From \eqref{eq:x_0-C_2}, we get 
\[
V_{1}\left(x_{0}\right)=-C_{1}\left(x_{0}+C_{2}\right)^{\gamma}=-C_{1}\left(\frac{\sigma^{2}\left(1-\gamma\right)}{\mu}\right)^{\gamma}.
\]
 From $V_{2}\left(x_{0}\right)=C_{3}+\beta C_{3}$, and from the continuity
of $V$ at $x_{0}$, we obtain the expression for $C_{1}$: 
\[
C_{1}=-\left(1+\beta\right)\left(\frac{\sigma^{2}\left(1-\gamma\right)}{\mu}\right)^{-\gamma}C_{3}.
\]

Let $\lambda=-\left(1+\beta\right)\left(\frac{\sigma^{2}\left(1-\gamma\right)}{\mu}\right)^{-\gamma}$.
(Obviously, $\lambda<0$ since $\beta>-1$.) Now, we can write $V_{1}$
in terms of $C_{2}$ and $C_{3}$:
\[
V_{1}\left(x;C_{2},C_{3}\right)=-\lambda C_{3}\left(x+C_{2}\right)^{\gamma}.
\]
 What remains is to determine $C_{2}$ and $C_{3}$. Once these constants
are found, we have $V_{1}\left(x\right)$ and $V_{2}\left(x\right)$,
and thus $V$$\left(x\right)$. Let 
\[
V\left(x,C_{2},C_{3}\right)=\begin{cases}
V_{1}\left(x;C_{2},C_{3}\right)=-\lambda C_{3}\left(x+C_{2}\right)^{\gamma} & \mbox{, for }0<x\le x_{0}\\
V_{2}\left(x;C_{3}\right)=C_{3}e^{\rho_{1}\left(x-x_{0}\right)}+\beta C_{3}e^{-\rho_{2}\left(x-x_{0}\right)} & \mbox{, for }x\ge x_{0}
\end{cases}
\]
 where $x_{0}=\frac{\sigma^{2}\left(1-\gamma\right)}{\mu}-C_{2}$.

Note that if $C_{2}\ge0$ and $C_{3}<0$, then it is easy to show
that for $x>0$, $V_{1}^{\prime}\left(x;C_{2},C_{3}\right)<0$, $V_{1}^{\prime\prime\prime}\left(x;C_{2},C_{3}\right)<0$
and $V_{2}^{\prime}\left(x;C_{2},C_{3}\right)<0$, $V_{2}^{\prime\prime\prime}\left(x;C_{2},C_{3}\right)<0$.
Therefore, $V\left(x,C_{2},C_{3}\right)$ is decreasing on $x>0$
and$V^{\prime}\left(x,C_{2},C_{3}\right)$ is concave on $x>0$. In
the remainder of this section, we find $C_{2}$ and $C_{3}$ and complete
the construction of the function $V$. We do this in an implicit manner
by adopting an auxiliary problem in which no contingent calls are
allowed and by using the optimal value function of that problem to`
construct the function $V$.

Let's consider a slightly different problem in which only those controls
$U$ for which $\xi_{i}$ on the right-hand side of (\ref{contU})
are negative allowed. This problem is similar to that considered
in Cadenillas \emph{et al.} \cite{Cadenillas2006-181}. Let $v(x)$
be the optimal value function for this problem. As was shown in \cite{Cadenillas2006-181},
the function $v$ satisfies the same HJB equation, except for boundary
conditions (\ref{Vl0}) and (\ref{VprA}). These conditions are replaced
by $v(0)=0$.

The same arguments as those above show that we can make the conjecture
that the function $v$ should be sought as a solution to (\ref{eq:aux_HJB1})-\eqref{eq:upper_band_condi}
below. 
\begin{align}
2r\sigma^{2}v\left(x\right)v^{\prime\prime}\left(x\right)+\mu^{2}\left(v^{\prime}\left(x\right)\right)^{2}=0\mbox{ for }0\leq x\leq\tilde{x}_{0}\label{eq:aux_HJB1}\\
\frac{1}{2}\sigma^{2}v^{\prime\prime}\left(x\right)+\mu v^{\prime}\left(x\right)-rv\left(x\right)=0\mbox{ for }\tilde{x}_{0}<x\leq\tilde{b}\label{eq:aux_HJB2}\\
v\left(0\right)=0\label{eq:aux_bound}\\
v^{\prime}\left(\tilde{b}\right)=-c^{-}\label{eq:aux_bound_b}\\
v^{\prime}\left(\tilde{B}\right)=-c^{-}\label{eq:aux_bound_B}\\
v\left(\tilde{b}\right)=v\left(\tilde{B}\right)+K^{-}-c^{-}\left(\tilde{b}-\tilde{B}\right),\label{eq:upper_band_condi}
\end{align}
 where $\tilde{x}_{0}=\frac{\sigma^{2}\left(1-\gamma\right)}{\mu}$.

\subsubsection{A solution to the auxiliary problem}

First note that a general solution to (\ref{eq:aux_HJB1}), (\ref{eq:aux_bound})
is $-cx^{\gamma}$, where $\gamma$ is the same as in (\ref{gammadef})
and $c$ is a free constant, and a general solution to (\ref{eq:aux_HJB2})
is $-a_{1}e^{\rho_{1}(x-\tilde{x}_{0})}-a_{2}e^{\rho_{2}(x-\tilde{x}_{0})}$,
where $\rho_{1}$ and $\rho_{2}$ are the same as in (\ref{rho1}),(\ref{rho2}).

To solve our auxiliary problem we apply the same technique as that
used in Cadenillas \emph{et al.} \cite{Cadenillas2006-181}. We begin
with $H\left(x\right)$, which is defined as follows. 
\begin{equation}
H\left(x\right)=\begin{cases}
-\gamma x^{\gamma-1}, & 0<x\le\tilde{x}_{0}\\
-a_{1}\rho_{1}e^{\rho_{1}\left(x-\tilde{x}_{0}\right)}+a_{2}\rho_{2}e^{-\rho_{2}\left(x-\tilde{x}_{0}\right)}, & x\ge\tilde{x}_{0}.
\end{cases}\label{Hdef}
\end{equation}
 In this expression, constants $a_{1}$ and $a_{2}$ are chosen in
such a way that $-a_{1}\rho_{1}+a_{2}\rho_{2}=-\gamma\tilde{x}_{0}^{\gamma-1}$
and $-a_{1}\rho_{1}^{2}+a_{2}\rho_{2}^{2}=-\gamma(\gamma-1)\tilde{x}_{0}^{\gamma-2}$.
That is, the functions $H$ and $H'$ are continuous at $\tilde{x}_{0}$.
(Note that $H(x)$ is a derivative of $-x^{\gamma}$ on $[0,\tilde{x}_{0}]$
and $H(x)$ is a derivative of $-a_{1}e^{\rho_{1}(x-\tilde{x}_{0})}-a_{2}e^{\rho_{2}(x-\tilde{x}_{0})}$
on $[\tilde{x}_{0},\infty)$.) We next examine the family of functions
$MH(x)$, where $M>0$. We seek $M^{*}$ such that $M^{*}H(x)$ becomes
the derivative of the optimal value function $v$.

\begin{figure}[h]
\begin{centering}
\caption{\label{fig:auxiliary}Solution to the auxiliary problem}

\par\end{centering}

\begin{centering}
\includegraphics[scale=0.7]{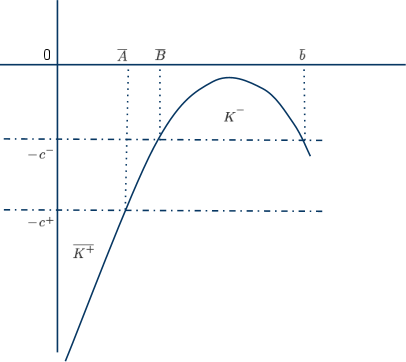} 
\par\end{centering}

\centering{} 
\end{figure}

To this end, we start by finding points $\tilde{b}_{M}$ and $\tilde{B}_{M}$
such that $\tilde{b}_{M}=\max\{x:MH(x)=-c^{-}\}$ and $\tilde{B}_{M}=\min\{x:MH(x)=-c^{-}\}$.
Note that $H$ is a concave function, which is easily checked by differentiation.
Let $\bar{x}=\arg\max H(x)$, the point at which the maximum of $H$
is achieved (it is easy to see that $H'(0)>0$, whereas $\lim_{x\to\infty}H'(x)=-\infty$,
which shows that $\bar{x}$ exists; in view of the fact that $H''(x)<0$,
it is unique). It is obvious by virtue of the concavity of $H$ that,
for any $M\le-c^{-}/H(\bar{x})$, $\tilde{b}_{M}$ and $\tilde{B}_{M}$
exist.

We now consider $I(M)=\int_{\bar{B}^{M}}^{\bar{b}^{M}}\left(MH\left(x\right)+c^{-}\right)\mbox{d}x$.
Informally, $I(M)$ is the area under the graph of $MH(x)$ and above
the horizontal line $y=-c^{-}$ . It is obvious that $I(M)$ is a
continuous function of $M$. For $M=-c^{-}/H(\bar{x})$, we have $\tilde{b}_{M}=\tilde{B}_{M}$;
therefore $I(-c^{-}/H(\bar{x}))=0$. We set $I(M)\to\infty$,
as $M\to0$, because $MH(x)\to0$, whereas $\tilde{b}_{M}\to0$ and
$\tilde{B}_{M}\to\infty$. Therefore, there exists an $M^{*}$ such
that $I(M^{*})=K^{-}$. We also have $\bar{b}=\tilde{b}_{M^{*}}$
and $\bar{B}=\tilde{B}_{M^{*}}$. Let 
\[
H^{*}\left(x\right)=\begin{cases}
M^{*}H(x), & x\le\bar{b},\\
-c^{-}, & x\ge\bar{b}
\end{cases}.
\]
 Then, 
\begin{equation}
v(x)=\int_{0}^{x}H^{*}(y)dy\label{defv1}
\end{equation}
 is the optimal value function of the auxiliary problem (see Figure
\ref{fig:auxiliary}). The proof here is identical to that of a similar
statement in Cadenillas \emph{et al.} \cite{Cadenillas2006-181} and
thus we omit it.

\subsection{The Optimal Value Function for the Original Problem}

We employ the function $H^{*}$ obtained in the previous subsection
to construct the derivative of the optimal value function $V$. The
main idea is to consider $H^{*}(x+S)$ and try to find $S$ such that
$V'(x)=H^{*}(x+S)$. The optimal value function $V$ will then be
sought in the form of $v(x+S)$. To this end, we need the following
proposition. \begin{prop}\label{shiftode} Suppose that $f\left(x\right)$
satisfies (\ref{eq:aux_HJB1}) (satisfies (\ref{eq:aux_HJB2})); then,
for any $S$, the function $f\left(x+S\right)$ satisfies the same
equation on the interval shifted by $S$ to the left. \end{prop}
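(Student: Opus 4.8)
The plan is to exploit the fact that both ordinary differential equations in question are \emph{autonomous}: neither (\ref{eq:aux_HJB1}) nor (\ref{eq:aux_HJB2}) contains the independent variable $x$ explicitly --- the coefficients $r$, $\sigma$, $\mu$ are constants, and every occurrence of $x$ appears only as the argument of $f$, $f'$, or $f''$. Consequently a translation of the argument leaves the form of the equation unchanged, and the only real content of the statement is a bookkeeping remark about how the interval of validity moves.

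Concretely, I would set $g(x) := f(x+S)$. By the chain rule $g'(x) = f'(x+S)$ and $g''(x) = f''(x+S)$. Suppose first that $f$ satisfies (\ref{eq:aux_HJB1}) on $[0,\tilde{x}_0]$, i.e.\ $2r\sigma^2 f(y)f''(y) + \mu^2 (f'(y))^2 = 0$ for every $y \in [0,\tilde{x}_0]$. Fix any $x$ with $x+S \in [0,\tilde{x}_0]$, that is $x \in [-S,\tilde{x}_0 - S]$, and substitute $y = x+S$:
\[
2r\sigma^2 g(x)g''(x) + \mu^2\bigl(g'(x)\bigr)^2 = 2r\sigma^2 f(x+S)f''(x+S) + \mu^2\bigl(f'(x+S)\bigr)^2 = 0 .
\]
Hence $g$ satisfies (\ref{eq:aux_HJB1}) on the left-shifted interval $[-S,\tilde{x}_0 - S]$. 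The argument for (\ref{eq:aux_HJB2}) is word-for-word the same: if $\frac{1}{2}\sigma^2 f''(y) + \mu f'(y) - r f(y) = 0$ for $y \in (\tilde{x}_0,\tilde{b}]$, then for $x \in (\tilde{x}_0 - S,\tilde{b} - S]$ one has $\frac{1}{2}\sigma^2 g''(x) + \mu g'(x) - r g(x) = \frac{1}{2}\sigma^2 f''(x+S) + \mu f'(x+S) - r f(x+S) = 0$.

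There is essentially no obstacle here; the statement is an elementary invariance property, and the single point that warrants a word of care is that (\ref{eq:aux_HJB1}) is \emph{nonlinear} --- but since its nonlinearity is assembled purely from the pointwise values $f,f',f''$ and never from $x$, autonomy, and therefore translation invariance, is unaffected. In the subsequent application the shift will be chosen (so that $V'(x) = H^*(x+S)$) with $S \ge 0$, so the relevant left-shifted ranges are $[-S,\tilde{x}_0 - S]$ and $(\tilde{x}_0 - S,\tilde{b} - S]$; all that is needed downstream is precisely that the same two ODEs continue to govern $v(x+S)$ on these translated ranges, which is what the proposition records.
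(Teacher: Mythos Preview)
Your proposal is correct and is exactly the straightforward chain-rule verification the paper has in mind; the paper itself does not spell out a proof but merely states that ``the proof of this proposition is straightforward.'' Your treatment supplies the missing details accurately, including the observation that autonomy of both equations is what makes translation work even for the nonlinear equation (\ref{eq:aux_HJB1}).
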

The proof of this proposition is straightforward.

From(\ref{Hdef}), we can see that $H^{*}$ has a singularity at $0$
with $\lim_{x\downarrow0}H^{*}=-\infty$. The concavity of $H^{*}$
on $[0,\bar{b}]$ implies that $H^{*}$ is increasing on $(0,\bar{x}]$
and decreasing on $[\bar{x},\infty)$ (recall that $H^{*}(x)$ is
constant on $[\bar{b},\infty)$). Therefore, there exists unique 
$0<\bar{A}<\bar{B}$
such that $CH^{*}\left(\bar{A}\right)=-c^{+}$. Define 
\begin{equation}
\overline{K^{+}}=\int_{0}^{\bar{A}}\left(-c^{+}-CH^{*}\left(x\right)\right)\mbox{d}x.
\end{equation}
 Note that $H^{*}$ decreases to $-\infty$ at $0$ at the order of
$x^{\gamma-1}$ (see (\ref{Hdef})); therefore, $H^{*}$ is integrable
at $0$ and, as a result, $\overline{K^{+}}<\infty$.

The qualitative nature of the solution to the original problem depends
on the relationship between $K^{+}$ and $\overline{K^{+}}$; hence,
we divide our analysis into two cases.

\subsubsection{The case of $K^{+}\le\overline{K^{+}}$.}

Consider the following integral 
\[
{\mathcal{J}}(S)=\int_{S}^{\bar{A}}-c^{+}-H^{*}(y)dy
\]
 Geometrically this integral represents the area of a curvilinear
triangle bounded by the lines $y=-c^{+}$, $x=S$ and the graph of
the function $H^{*}(x)$. Obviously, ${\mathcal{J}}(S)$ is a continuous
function of $S$. Because ${\mathcal{J}}(0)=\overline{K^{+}}$ and
$\mathcal{J}(\bar{A})=0$, there exists an $S^{*}$ (see Figure \ref{fig:KpBar_gt_kp})
such that

\begin{equation}
\mathcal{J}(S^{*})=K^{+}.\label{JK+}
\end{equation}

\begin{figure}[h]
 \caption{Determining the parameters for the value function $V(x)$ by utilizing
the solution of the auxiliary problem $H^{*}(x)$ when $K^{+}\le\overline{K^{+}}$.}

\label{fig:KpBar_gt_kp}

\centering{}\includegraphics[scale=0.7]{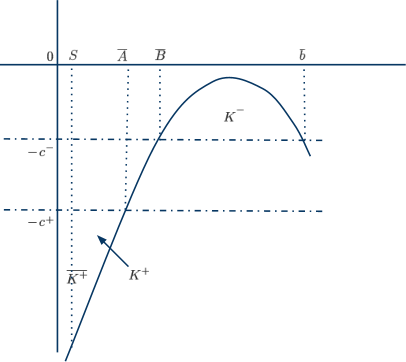} 
\end{figure}

In what follows, we show that $H^{*}(x+S^{*})$ is the derivative
of the solution $V$ to the QVI, inequalities (\ref{eq:QVI 1})-(\ref{eq:QVI 3}).

Let 
\begin{equation}
V(x)=v(x+S^{*}).\label{Vdef1}
\end{equation}
 Also let 
\begin{align}
A=\bar{A}-S^{*},\label{defAf}\\
b=\bar{b}-S^{*},\label{defbf}\\
B=\bar{B}-S^{*},\label{defBf}\\
x_{0}=\tilde{x}_{0}-S^{*},\label{defx0f}\\
x^{*}=\bar{x}-S^{*}.\label{defxst}
\end{align}
 By virtue of Proposition \ref{shiftode}, the function $V$ satisfies
\eqref{eq:HJB 1} on $(0,x_{0}]$, as well as \eqref{eq:HJB_2} on
$[x_{0},b)$. In addition, from (\ref{JK+}), we can see that 
\begin{equation}
V(0)-V(A)\equiv-\int_{S^{*}}^{\bar{A}}H^{*}(y)dy=c^{+}A+K^{+}.\label{Vvarequal}
\end{equation}
 From the construction of the function $v$, we can also see that
\begin{equation}
V(b)-V(B)\equiv v(\bar{b})-v(\bar{B})=-c^{-}(b-B)+K^{-}.\label{VvarequalB}
\end{equation}
 \begin{theor}\label{hjbproof} The Function $V$ given by (\ref{VvarequalB})
is a solution to the QVI (\ref{eq:QVI 1})-(\ref{eq:QVI 3}). \end{theor}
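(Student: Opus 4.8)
The plan is to verify, piece by piece, that the function $V(x)=v(x+S^{*})$ constructed in \eqref{Vdef1} satisfies each of the three QVI conditions \eqref{eq:QVI 1}, \eqref{eq:QVI 2} and \eqref{eq:QVI 3} on $[0,\infty)$. The strategy exploits the fact that, by construction, $V'(x)=H^{*}(x+S^{*})$, and that $H^{*}$ is a known explicit concave function whose behaviour on the three regions $(0,x_{0}]$, $[x_{0},b]$, $[b,\infty)$ is completely understood from Section 3.1. Everything will be reduced to monotonicity/convexity statements about $H^{*}$ plus the two ``band'' identities \eqref{Vvarequal} and \eqref{VvarequalB} that were arranged in choosing $S^{*}$ via \eqref{JK+}.

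First I would dispose of \eqref{eq:QVI 1}, i.e. $\min_{u\in[0,1]}(\mathcal{L}^{u}V-rV)\ge 0$ on $(0,\infty)$. On $(0,x_{0}]$ and on $[x_{0},b)$ equality holds by Proposition \ref{shiftode}, since $v$ solves \eqref{eq:aux_HJB1}--\eqref{eq:aux_HJB2} and shifting preserves these ODEs; one must also check that the minimizing $u^{*}(x)$ given by \eqref{eq:u_star(x)} indeed lies in $(0,1)$ on $(0,x_{0})$ and equals $1$ at $x_{0}$, which follows from the definition of $x_{0}$ in \eqref{defx0f} and the sign of $V'$, $V''$. For $x\ge b$ we have $V(x)=V(b)-c^{-}(x-b)$, so $V''=0$ and $\mathcal{L}^{u}V-rV = u\mu V' - rV = -u\mu c^{-} - r(V(b)-c^{-}(x-b))$; since $V'\le 0$ makes $V\le 0$ is \emph{false} in general — rather one checks directly that $V(b)<0$ is not what is needed; instead, using $V(b)=v(\bar b)<0$... the correct observation is that for $x\ge b$ the expression is minimized at $u=0$ (because $\mu V'\le 0$ would push the other way — here $V'=-c^{-}<0$ so $u\mu V'$ is decreasing in $u$, hence the min over $u$ is at $u=1$), giving $-\mu c^{-} - rV(x)$, and one must verify this is $\ge 0$, i.e. $V(x)\le -\mu c^{-}/r$ for $x\ge b$; this is where the explicit value of $v(\bar b)$ and the definition of $\bar b$ as the point where $M^{*}H=-c^{-}$ enter. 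I would carry this sign check out carefully, as it is a genuine (if short) computation rather than a formality.

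Next I would verify \eqref{eq:QVI 2}, $\mathcal{M}V\ge V$, i.e. for every $x\ge 0$ and every $\xi\neq 0$, $g(\xi)+V(x+\xi)\ge V(x)$. Splitting into $\xi>0$ (contingent call) and $\xi<0$ (refund): for $\xi<0$ this amounts to $V(x)-V(x+\xi)\le K^{-}-c^{-}\xi^{-}$, and since $V'=H^{*}(\cdot+S^{*})\ge -c^{-}$ fails in general — actually $H^{*}\le$ its max, but near $\bar x$ we have $H^{*}>-c^{-}$, so the worst downward jump is governed by integrating $H^{*}$; the inequality $\int_{x+\xi}^{x}H^{*}(y+S^{*})\,dy \le K^{-}-c^{-}\xi^{-}$ is exactly the statement that the ``area above $-c^{-}$'' never exceeds $K^{-}$, which is how $M^{*}$ was chosen ($I(M^{*})=K^{-}$) together with the concavity of $H^{*}$. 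The $\xi>0$ case is symmetric and uses instead the choice \eqref{JK+} of $S^{*}$ making the ``area above $-c^{+}$'' equal to $K^{+}$, again with concavity of $H^{*}$ on $(0,\bar b]$. These are the two places where the precise construction pays off, and I would present them as the analytic heart of the argument.

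Finally, \eqref{eq:QVI 3}: the product $(\mathcal{M}V-V)\cdot\min_{u}(\mathcal{L}^{u}V-rV)$ must vanish identically. On $(0,b)$ the second factor is zero by the first step; on $\{0\}\cup[b,\infty)$ I must show the first factor is zero, i.e. $\mathcal{M}V=V$ there. At $x=0$ this is precisely \eqref{Vvarequal}: the optimal jump is $\xi=A$ and $g(A)+V(A)=K^{+}+c^{+}A+V(A)=V(0)$; that $\xi=\bar A-S^{*}$ is the actual minimizer in \eqref{eq:M_oper} follows from $V'(\bar A-S^{*})=H^{*}(\bar A)$... wait, $V'(A)=H^{*}(A+S^{*})=H^{*}(\bar A)=-c^{+}/C$ — here one uses the first-order condition $g'(\xi)+V'(x+\xi)=0$, i.e. $c^{+}+V'(x+\xi)=0$, pinning the minimizer at the level set $V'=-c^{+}$, and convexity considerations (via $H^{*}$ increasing then decreasing) guarantee it is a global min. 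For $x\ge b$ the optimal jump is $\xi=B-x<0$ and \eqref{VvarequalB} gives $\mathcal{M}V=V$; one checks the minimizer is where $V'=-c^{-}$, namely $B$, again by the shape of $H^{*}$. Assembling these three verifications completes the proof. The main obstacle I anticipate is not any single step but bookkeeping the three regions consistently and, in particular, nailing the sign check for \eqref{eq:QVI 1} on $[b,\infty)$ and confirming that the first-order conditions for $\mathcal{M}$ really deliver \emph{global} minimizers — both rest on the concavity of $H^{*}$, so I would isolate that as a lemma and invoke it repeatedly.
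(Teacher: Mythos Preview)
Your plan is correct and follows essentially the same three–part decomposition as the paper: the paper breaks the theorem into Propositions~\ref{qvicont0b} (HJB with equality on $(0,b)$), \ref{qvim1m2} ($\mathcal{M}V\ge V$ everywhere, with equality at $0$ and on $[b,\infty)$), and \ref{qvicontgrb} (the QVI inequality on $(b,\infty)$), which is exactly your (i)--(iii). Your treatment of the $\mathcal{M}$ operator via the two ``area'' identities $I(M^{*})=K^{-}$ and $\mathcal{J}(S^{*})=K^{+}$ together with concavity of $H^{*}$ is precisely the paper's argument.

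Two places where the paper is tighter than your outline. First, on $[x_{0},b)$ you say ``equality holds by Proposition~\ref{shiftode}'', but that only gives $\mathcal{L}^{1}V-rV=0$; you must still argue that $u=1$ is the minimiser on the whole interval, not just at $x_{0}$. The paper does this by subtracting: $\mathcal{L}^{1}V-\mathcal{L}^{u}V=\tfrac{1}{2}\sigma^{2}(1-u^{2})V''+\mu(1-u)V'$, and checks this is $\le 0$ using the signs of $V'$, $V''$ and the defining relation at $x_{0}$. Second, for $x\ge b$ you propose to verify $V(b)\le -\mu c^{-}/r$ by computing $v(\bar b)$ explicitly. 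The paper's route is shorter: since $\mathcal{L}^{1}V(b-)-rV(b)=0$ and $V''(b-)<0$, dropping the $V''$ term gives $-\mu c^{-}-rV(b)>0$ immediately; monotonicity of $V$ then extends this to all $x>b$. This avoids any explicit evaluation of $v(\bar b)$.
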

The proof of this theorem is divided into several propositions. \begin{prop}\label{qvicont0b}
The function $V$ satisfies (\ref{eq:HJB continuous}) on $[0,b)$.
\end{prop}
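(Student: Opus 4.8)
The plan is to reduce everything to a pointwise minimization over $u\in[0,1]$. Fixing $x\in(0,b)$ and writing, via \eqref{eq:Lu}, $q_{x}(u):=(\mathcal L^{u}V-rV)(x)=\tfrac12u^{2}\sigma^{2}V''(x)+u\mu V'(x)-rV(x)$ — a quadratic in $u$ — the goal is to show $\min_{u\in[0,1]}q_{x}(u)=0$. By \eqref{Vdef1} we have $V=v(\,\cdot+S^{*}\,)$, so Proposition~\ref{shiftode} gives that $V$ agrees on $(0,x_{0}]$ with a solution $V_{1}$ of \eqref{eq:HJB 1} and on $[x_{0},b)$ with a solution $V_{2}$ of \eqref{eq:HJB_2}, where $x_{0}$ is as in \eqref{defx0f}. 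I would first record the structural facts supplied by the construction of $v$ and used throughout: $V<0$ and $V'<0$ on $(0,b)$, $C_{2}=S^{*}\ge0$, $C_{3}<0$, $V\in C^{2}(0,b)$, and — since $V'=H^{*}(\,\cdot+S^{*}\,)$ with $H^{*}$ increasing on $(0,\tilde x_{0})$ by \eqref{Hdef} — $V''>0$ on $(0,x_{0})$. I would then prove $\min_{u}q_{x}=0$ separately on $(0,x_{0})$ and on $(x_{0},b)$ (if $x_{0}\le0$ the first region is empty and only the second argument is needed); equality at $x_{0}$ then follows from continuity of $x\mapsto\min_{u\in[0,1]}q_{x}(u)$.

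On $(0,x_{0})$ the argument is short. Since $V''>0$, $q_{x}$ is strictly convex with unconstrained minimizer $u^{*}(x)=-\mu V'(x)/(\sigma^{2}V''(x))$ as in \eqref{eq:u_first_order}, and the explicit form $V_{1}(x)=-\lambda C_{3}(x+S^{*})^{\gamma}$ gives $u^{*}(x)=\mu(x+S^{*})/(\sigma^{2}(1-\gamma))$ as in \eqref{eq:u_star(x)}, which lies in $(0,1)$ for $0<x<x_{0}$ and equals $1$ at $x_{0}$ by \eqref{defx0f}. Hence the constrained minimum is the interior one, $q_{x}(u^{*}(x))=-\mu^{2}V'(x)^{2}/(2\sigma^{2}V''(x))-rV(x)$, and this vanishes precisely because $V_{1}$ solves \eqref{eq:HJB 1} (equivalently \eqref{eq:aux_HJB1}).

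On $(x_{0},b)$, $V=V_{2}$ solves \eqref{eq:HJB_2}, so $q_{x}(1)=0$ automatically; the task is to show $u=1$ actually minimizes $q_{x}$ over $[0,1]$. I would compute $\partial_{u}q_{x}(1)=\sigma^{2}V_{2}''(x)+\mu V_{2}'(x)$, substitute $V_{2}(x)=C_{3}e^{\rho_{1}(x-x_{0})}+\beta C_{3}e^{-\rho_{2}(x-x_{0})}$ from \eqref{eq:General solution for HJB 2}, and simplify using $\sigma^{2}\rho_{1}+\mu=\sigma^{2}\rho_{2}-\mu=\sqrt{\mu^{2}+2r\sigma^{2}}$ (immediate from \eqref{rho1}, \eqref{rho2}) together with $\beta=-\rho_{1}/\rho_{2}$, which follows from $\rho_{1}\rho_{2}=2r/\sigma^{2}$, $\rho_{2}-\rho_{1}=2\mu/\sigma^{2}$ and the definition of $\beta$ (equivalently, from the smooth-fit relation $\tfrac{\mu}{2r}V_{2}'(x_{0})=V_{2}(x_{0})$). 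This collapses the expression to $\partial_{u}q_{x}(1)=\sqrt{\mu^{2}+2r\sigma^{2}}\,C_{3}\rho_{1}\bigl(e^{\rho_{1}(x-x_{0})}-e^{-\rho_{2}(x-x_{0})}\bigr)$, which is strictly negative for $x>x_{0}$ since $C_{3}<0$ and the bracket is positive. From here I split on $\operatorname{sgn}V''(x)$: if $V''(x)>0$ then $q_{x}$ is convex and $\partial_{u}q_{x}(1)<0$ forces $q_{x}$ to decrease on $[0,1]$, so its minimum is $q_{x}(1)=0$; if $V''(x)\le0$ then $q_{x}$ is concave (or, at the single point where $V''=0$, linear) on $[0,1]$, so its minimum is at an endpoint, and since $q_{x}(0)=-rV_{2}(x)>0$ (as $V_{2}(x)<0$) and $q_{x}(1)=0$, the minimum is again $0$. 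Hence $\min_{u}q_{x}=0$ on $(x_{0},b)$, which together with the previous step proves \eqref{eq:HJB continuous} on $[0,b)$.

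I expect the region $(x_{0},b)$ to be the crux. The sign of $V''=V_{2}''$ is not constant there — it is positive just above $x_{0}$ and turns negative past the point $x^{*}=\bar x-S^{*}$ at which $V'$ peaks — so no single first-order condition closes the argument and the split on $\operatorname{sgn}V''$ is unavoidable. What makes both branches of that split go through is the clean sign of $\partial_{u}q_{x}(1)$, and that rests entirely on the algebraic coincidence $\beta=-\rho_{1}/\rho_{2}$ (i.e.\ the smooth-fit condition at $x_{0}$); this identity, together with the sign facts $C_{3}<0$, $V<0$, $V'<0$ on $(0,b)$ and $V''>0$ on $(0,x_{0})$ inherited from the auxiliary-problem construction, are the points I would verify most carefully.
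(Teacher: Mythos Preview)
Your proof is correct and uses the same decomposition at $x_{0}$ as the paper: on $(0,x_{0}]$ both you and the paper simply observe that the unconstrained minimizer $u^{*}(x)=-\mu V'/(\sigma^{2}V'')$ lies in $(0,1)$ and that \eqref{eq:HJB 1} then gives $q_{x}(u^{*})=0$. The difference is in the treatment of $[x_{0},b)$. The paper subtracts the identity $\mathcal L^{1}V-rV=0$ to reduce the claim to $\tfrac12\sigma^{2}(1-u^{2})V''+\mu(1-u)V'\le 0$, notes this holds at $x_{0}$ by continuity, and then argues it propagates to all $x\ge x_{0}$ via monotonicity of $V'$ and $V''$. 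You instead analyze $q_{x}$ directly as a quadratic in $u$: you compute $\partial_{u}q_{x}(1)=\sigma^{2}V''+\mu V'$ explicitly from the closed form of $V_{2}$ and the identity $\beta=-\rho_{1}/\rho_{2}$, obtain the strict sign, and then close with a clean case split on $\mathrm{sgn}\,V''$. The two routes are in fact governed by the same scalar inequality $\sigma^{2}V''+\mu V'\le 0$ (factor $(1-u)$ out of the paper's expression and evaluate at $u=1$), but your explicit computation is more robust: the paper's one-line justification invokes ``$V'<0$ and $V''<0$'', yet $V''>0$ on $[x_{0},x^{*})$, so its monotonicity claim needs exactly the kind of explicit check you supply. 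Your case split on $\mathrm{sgn}\,V''$ also makes transparent why no single first- or second-order criterion suffices on this interval.
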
 \begin{proof} 1$^{\circ}$. From the construction of
the function $v$, we have $-\frac{\mu v^{\prime}}{\sigma^{2}v^{\prime\prime}}\le1$
on $(0,\tilde{x}_{0}]$. Consequently, 
\begin{equation}
-\frac{\mu V^{\prime}}{\sigma^{2}V^{\prime\prime}}<1\label{Vless1}
\end{equation}
 on $(0,x_{0}]$. As $V$ satisfies (\ref{eq:HJB 1}), it also satisfies
(\ref{eq:HJB continuous}) because these two equations are equivalent
whenever (\ref{eq:HJB continuous}) holds.

2$^{\circ}$. To prove that (\ref{eq:HJB continuous}) holds for $x\ge x_{0}$,
it is sufficient to show that for each $0\le u\le1$, 
\begin{equation}
\mathcal{L}^{u}V-rV\equiv\frac{1}{2}\sigma^{2}u^{2}V_{2}^{\prime\prime}+\mu uV_{2}^{\prime}-rV_{2}=0\ge0\label{HJBinequal}
\end{equation}
 because, in view of Proposition \ref{shiftode}, the function $V$
satisfies (\ref{eq:HJB_2}) (that is, $\mathcal{L}^{1}V-rV=0$). By
subtracting (\ref{HJBinequal}) from the left hand side of (\ref{eq:HJB_2}),
we can see that (\ref{HJBinequal}) is equivalent to 
\begin{equation}
\frac{1}{2}\sigma^{2}(1-u^{2})V_{2}^{\prime\prime}(x)+\mu(1-u)V_{2}^{\prime}(x)\le0\label{equivHJBineq}
\end{equation}
 for $x\in[x_{0},b)$. In view of the continuity of the first and
second derivatives, and in view of the fact that (\ref{eq:HJB continuous})
holds on $(0,x_{0}]$, we know that (\ref{equivHJBineq}) is true
for $x=x_{0}$. As both $V'<0$ and $V''<0$ on the left-hand side
of (\ref{equivHJBineq}) are decreasing, therefore is nonpositive for all
$x\ge x_{0}$. \end{proof}

Note that $\mathcal{M}\phi(x)=\min(\mathcal{M}_{1}\phi(x),\mathcal{M}_{2}\phi(x))$,
where 
\[
\mathcal{M}_{1}\phi(x)=\inf_{\xi>0}[K^{+}+c^{+}\xi+\phi(x+\xi)];\,\,\,\mathcal{M}_{2}\phi(x)=\inf_{\xi>0}[K^{-}+c^{-}\xi+\phi(x-\xi)]
\]

\begin{prop}\label{qvim1m2} For each $x\ge0$, we have 
\begin{equation}
\mathcal{M}V(x)\ge V(x).\label{MVgeneral}
\end{equation}
 If $x=0$, or if $x\ge b$, then 
\begin{equation}
\mathcal{M}V(x)=V(x).\label{MVequal}
\end{equation}
 \end{prop}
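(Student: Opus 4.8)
The plan is to verify the two claims of Proposition \ref{qvim1m2} by analyzing the operators $\mathcal{M}_1$ and $\mathcal{M}_2$ separately, using the structural properties of $V$ established so far: $V$ is decreasing and convex-in-a-certain-sense (its derivative $V'=H^*(\cdot+S^*)$ is concave on $[0,b]$, with $V'=-c^-$ on $[b,\infty)$), and $V$ satisfies the two ``smooth pasting'' identities \eqref{Vvarequal} and \eqref{VvarequalB}. First I would treat $\mathcal{M}_2$, the refund operator. For fixed $x$, I would minimize $\psi_2(\xi)=K^-+c^-\xi+V(x-\xi)$ over $\xi>0$. Since $\psi_2'(\xi)=c^--V'(x-\xi)$ and $V'$ is $-c^-$ on $[b,\infty)$, increases to its max on $[\bar x - S^*, \bar b - S^*]$... more precisely $V'$ is concave with $V'(B)=V'(b)=-c^-$, the equation $V'(x-\xi)=-c^-$ is solved exactly at $x-\xi\in\{B\}\cup[b,\infty)$ within the relevant range; I would argue the relevant stationary point is $x-\xi=B$, i.e. $\xi = x-B$, and that this is a genuine minimum because $V'>-c^-$ on $(B,b)$ and $V'<-c^-$ on $(0,B)$. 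Evaluating, $\mathcal{M}_2 V(x) = K^- + c^-(x-B) + V(B)$. Using \eqref{VvarequalB} in the form $V(b)=V(B)+K^--c^-(b-B)$, this equals $V(b)+c^-(x-b) = V(x)$ when $x\ge b$ (by \eqref{Vgrb}), and is $>V(x)$ when $x<b$ since there $V$ lies strictly below the line through $(b,V(b))$ of slope $-c^-$ — this follows from $V'>-c^-$ on $(B,b)$ and, for $x<B$, from convexity of $V$ combined with $V'<-c^-$ there. Care is needed for small $x$ where $x-B<0$ is infeasible: then the infimum over $\xi>0$ is approached differently and one checks $\mathcal{M}_2 V(x)\ge V(x)$ directly from $c^-<1\le$ the local slope magnitude.

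Next I would treat $\mathcal{M}_1$, the contingent-call operator: minimize $\psi_1(\xi)=K^++c^+\xi+V(x+\xi)$ over $\xi>0$, with $\psi_1'(\xi)=c^++V'(x+\xi)$. Since $V'=H^*(\cdot+S^*)$ ranges from $-\infty$ near $0$ up through $-c^+$ at $x=A$ and then stays above $-c^+$... actually $H^*$ is concave, increasing on $(0,\bar x]$, decreasing after, so $C H^*(\bar A)=-c^+$ with $\bar A<\bar B$; translating, $V'(A)=-c^+$ with $V'<-c^+$ on $(0,A)$ and $V'>-c^+$ on $(A, \text{something})$. Hence $\psi_1$ is decreasing then increasing on the range where $x+\xi\le A$... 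I would conclude the minimizer is $\xi$ with $x+\xi=A$, i.e. $\xi=A-x$, valid when $x<A$, giving $\mathcal{M}_1 V(x)=K^++c^+(A-x)+V(A)$. By \eqref{Vvarequal}, $V(0)=V(A)+c^+A+K^+$, so at $x=0$ this is exactly $V(0)$, giving the equality \eqref{MVequal} at $x=0$. For $0<x<A$ one checks $\mathcal{M}_1 V(x)\ge V(x)$, i.e. $K^++c^+(A-x)+V(A)\ge V(x)$, equivalently $V(x)-V(A)\le c^+(A-x)$, which holds because $V'\ge -c^+$ fails on $(0,A)$ — wait, there $V'<-c^+$, so $V(x)-V(A)=-\int_x^A V' > c^+(A-x)$, the wrong direction. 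So I must instead argue the minimizer of $\psi_1$ is \emph{not} at $x+\xi=A$ when $x>0$; since $\psi_1'(\xi)=c^++V'(x+\xi)$ and $V'<-c^+$ for arguments in $(0,A)$, $\psi_1$ is actually decreasing there, so the infimum over the feasible calls that keep us ``reasonable'' is at the largest such $\xi$, and overall $\mathcal{M}_1 V(x) \ge V(x)$ follows from $\psi_1(\xi)-V(x) = K^++\int_x^{x+\xi}(c^++V'(y))\,dy + \big(V(x+\xi)-V(x)-\int\big)$... the clean way: $\psi_1(\xi)-V(x) = K^+ + \int_0^\xi (c^+ + V'(x+s))\,ds$, and I show this integral $\ge -K^+$ for all $\xi>0$ by bounding $\int_0^\xi(c^++V'(x+s))\,ds \ge \int_0^{A-x}(c^++V'(x+s))\,ds = -\mathcal{J}(x+S^*)\cdot(\text{sign})$ using that $c^++V'(y)\le 0$ for $y<A$ and $\ge 0$ for $y>A$ — wait that gives the integral is minimized at $\xi=A-x$ only if the integrand changes sign $-$ to $+$, which requires $V'$ increasing through $-c^+$; but $V'<-c^+$ on $(0,A)$ means $c^++V'<0$ there and the integrand is negative up to $A-x$ then positive, so indeed $\xi^*=A-x$ minimizes, and the minimum value is $K^+ - \mathcal{J}(x+S^*)$ where $\mathcal{J}$ is the curvilinear-triangle integral; since $\mathcal{J}$ is decreasing in its argument and $\mathcal{J}(S^*)=K^+$, for $x>0$ we get $\mathcal{J}(x+S^*)<K^+$, hence $\mathcal{M}_1V(x)-V(x)=K^+-\mathcal{J}(x+S^*)>0$, and at $x=0$ it is $K^+-\mathcal{J}(S^*)=0$. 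This is the crux and where I would be most careful.

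Finally, combining: $\mathcal{M}V(x)=\min(\mathcal{M}_1V(x),\mathcal{M}_2V(x))\ge V(x)$ in all cases, proving \eqref{MVgeneral}. For the equality \eqref{MVequal}: at $x=0$, $\mathcal{M}_1V(0)=V(0)$ from \eqref{Vvarequal} while $\mathcal{M}_2V(0)\ge V(0)$, so $\mathcal{M}V(0)=V(0)$. For $x\ge b$, I would show $\mathcal{M}_2 V(x)=V(x)$ using that on $[b,\infty)$, $V$ is linear with slope $-c^-$, so the refund $\xi=x-B$ gives $\mathcal{M}_2V(x)=K^-+c^-(x-B)+V(B)=c^-(x-b)+[K^-+c^-(b-B)+V(B)]=c^-(x-b)+V(b)=V(x)$ by \eqref{VvarequalB}; and $\mathcal{M}_1V(x)\ge V(x)$ from \eqref{MVgeneral}, so again the min is $V(x)$. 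I anticipate the main obstacle is the bookkeeping for $\mathcal{M}_1$ — correctly identifying that the optimal call lands exactly at level $A$ (equivalently $\bar A$ in the pre-shift coordinates), and that the resulting gap $\mathcal{M}_1V(x)-V(x)$ equals the monotone function $K^+-\mathcal{J}(x+S^*)$, which is nonnegative precisely because of the choice \eqref{JK+} of $S^*$; the sign-of-integrand argument and the feasibility constraint $x+\xi\le \bar b - S^*$ (beyond which $V'\equiv -c^-\ne -c^+$, so $\psi_1$ keeps increasing) both need to be handled cleanly. The $\mathcal{M}_2$ side is analogous but slightly easier since $B<b$ and the relevant slope comparison $-c^-$ versus $V'$ is direct.
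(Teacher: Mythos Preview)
Your approach is essentially the paper's: split $\mathcal{M}=\min(\mathcal{M}_1,\mathcal{M}_2)$, locate the optimal jump target at $A$ (resp.\ $B$) via the first-order condition $V'=-c^{\pm}$, and write the gap $\mathcal{M}_iV(x)-V(x)$ as $K^{\pm}-\int[-c^{\pm}-V'(y)]\,dy$, which is nonnegative by \eqref{JK+} (resp.\ the construction $I(M^*)=K^-$) and vanishes exactly at $x=0$ (resp.\ $x\ge b$); for $x>A$ the paper simply notes $\psi_1$ is increasing so $\mathcal{M}_1V(x)=V(x)+K^+$. One caution on bookkeeping: the refund cost is $K^--c^-\xi$, not $K^-+c^-\xi$ (the paper's displayed $\mathcal{M}_2$ carries the same typo but its computations use the correct sign), so your $\psi_2$, your stated $\psi_2'$, and your algebra showing $\mathcal{M}_2V(x)=V(x)$ for $x\ge b$ are internally inconsistent even though your first-order condition $V'(x-\xi)=-c^-$ and all conclusions are right.
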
 \begin{proof} 1$^{\circ}$. We first prove, that $\mathcal{M}_{1}V(x)\ge V(x)$.
Suppose that $x<A$. The functional $V(x+\xi)+c^{+}\xi$ is continuously
differentiable. By construction, $V'(x)=H^{*}(x+S^{*})$ is increasing
on $(0,x^{*})$ and decreasing on $[x^{*},\infty)$ with $V'(x)\equiv-c^{-}$
for $x\ge b$. Therefore, the point $A$ is the only point $y$ such
that $V'(y)\equiv H^{*}(y_{S}^{*})=-c^{+}$. Because $(H^{*})'(A)<0$,
we can see that $A=\arg\min_{x}V(x+\xi)+c^{+}\xi$. Therefore, for
$x\le A$, 
\[
\mathcal{M}_{1}V(x)=V(A)+c^{+}(A-x)+K^{+}.
\]
 Also, $\mathcal{M}_{1}V(x)-V(x)=V(A)-V(x)+c^{+}(A-x)+K^{+}\ge0$
iff 
\[
-c^{+}(A-x)-V(A)-V(x)\equiv\int_{x}^{A}[-c^{+}-V(y)]dy\le K^{+}.
\]
 However, 
\[
\int_{x}^{A}[-c^{+}-V(y)]dy\le\int_{0}^{A}[-c^{+}-V(y)]dy\equiv\int_{S^{*}}^{\bar{A}}[-c^{+}-H^{*}(z)]dz=K^{+}.
\]
 This proves that $M_{1}V(x)\ge V(x)$ for all $x\le A$ and also
shows that 
\begin{equation}
\mathcal{M}_{1}V(0)=V(0).\label{MV0}
\end{equation}
 Because $V'(y)>-c^{+}$ for $y>A$, we know that for any $x>A$ the
function $V(x+\xi)+c^{+}\xi$ is an increasing function of $\xi>0$.
Therefore, the minimum in the expression for $\mathcal{M}_{1}$ is
attained for $\xi=0$; as a result, $\mathcal{M}_{1}(x)=V(x)+K^{+}>V(x)$.

2$^{\circ}$. Consider $V(x-\xi)-c^{-}\xi$ for $b>x\ge B$. Because
$V'(y)>-c^{-}$ for $y>B$ and $V'(y)<c^{-}$ for $y<B$, we can see
that $V(x-\xi)-c^{-}\xi$ has a unique minimum at $x-\xi=B$. Therefore,
$\mathcal{M}_{2}V(x)=V(B)-c^{-}(x-B)+K^{-}$. Thus, $\mathcal{M}_{2}V(x)-V(x)\ge0$
iff 
\[
V(x)-V(B)+c^{-}(x-B)\equiv\int_{B}^{x}V'(y)+c^{-}dy\le K^{-}.
\]
 The foregoing inequality always holds true because, by construction,
\[
K^{-}=\int_{\bar{B}}^{\bar{b}}H^{*}(y)+c^{-}dy\equiv\int_{B}^{b}V'(y)+c^{-}dy.
\]
 For $x\ge b$, we note, that $V(x)-V(b)=-c^{-}(x-b)$, and hence 
\begin{equation}
\mathcal{M}_{2}V(x)-V(x)=\mathcal{M}_{2}V(b)-V(b)=V(B)-V(b)-c^{-}(b-B)+K^{-}=0.\label{V2b}
\end{equation}
 \end{proof} To complete the proof of Theorem \ref{hjbproof}, we
need only show the following. \begin{prop}\label{qvicontgrb} If
$x>b$, then (\ref{eq:QVI 1}) holds. \end{prop}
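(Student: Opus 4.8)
The plan is to verify the single QVI inequality~\eqref{eq:QVI 1}, namely $\mathcal{L}^{u}V-rV\ge 0$ for all $u\in[0,1]$, in the region $x>b$, since Propositions~\ref{qvicont0b} and~\ref{qvim1m2} have already disposed of the other QVI requirements on $[0,b)$ and on $\{0\}\cup[b,\infty)$. On $x>b$ we have the explicit form $V(x)=V(b)-c^{-}(x-b)$ from~\eqref{Vgrb}, so $V$ is affine there: $V'(x)=-c^{-}$ and $V''(x)=0$. Substituting into the generator~\eqref{eq:Lu} gives $(\mathcal{L}^{u}V)(x)-rV(x)=u\mu(-c^{-})-r\bigl(V(b)-c^{-}(x-b)\bigr)$, so the claim reduces to the elementary inequality $-u\mu c^{-}-rV(b)+rc^{-}(x-b)\ge 0$ for all $u\in[0,1]$ and all $x>b$.

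First I would observe that the left-hand side is increasing in $x$ (coefficient $rc^{-}>0$) and, in $u$, is minimized at $u=1$ (coefficient $-\mu c^{-}<0$); hence it suffices to check the worst case $x=b$, $u=1$, i.e. to show $-\mu c^{-}-rV(b)\ge 0$, equivalently $rV(b)\le -\mu c^{-}$, equivalently $V(b)\le -\mu c^{-}/r$ (recalling $V(b)<0$). This last bound is exactly the statement that at the upper switching point $b$ the value function already satisfies $\mathcal{L}^{1}V-rV\ge 0$; but in fact $V$ satisfies~\eqref{eq:HJB_2}, $\tfrac12\sigma^{2}V''+\mu V'-rV=0$, on a left-neighbourhood of $b$ with $V'(b)=-c^{-}$ by~\eqref{V2pb}, so $rV(b)=\tfrac12\sigma^{2}V''(b^{-})+\mu(-c^{-})$. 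Since $V''\le 0$ on $[x_0,b)$ (established in Proposition~\ref{qvicont0b}, where $V''<0$ was used), continuity of $V''$ gives $V''(b^{-})\le 0$, whence $rV(b)\le -\mu c^{-}$, which is precisely what is needed. Plugging back, for $x\ge b$ and $u\in[0,1]$,
\[
(\mathcal{L}^{u}V)(x)-rV(x)=-u\mu c^{-}-rV(b)+rc^{-}(x-b)\ge -\mu c^{-}-rV(b)\ge 0,
\]
which establishes~\eqref{eq:QVI 1} on $(b,\infty)$ and completes the proof of Theorem~\ref{hjbproof}.

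The only mildly delicate point is the sign of $V''$ as one approaches $b$ from the left: one must be sure that the strict concavity of $V$ on $(0,x_0]$ carried through $[x_0,b)$ in Proposition~\ref{qvicont0b} (where it was noted that $V''<0$ is decreasing) indeed yields $V''(b^{-})\le 0$ by continuity of the second derivative at the pasting point $x_0$ and across $[x_0,b)$. Everything else is a one-line monotonicity argument in $u$ and $x$. I do not expect any genuine obstacle; the substance of the proof is that the affine tail of $V$ matches the solution of the constant-coefficient ODE smoothly at $b$, so the HJB inequality at $b$ propagates to the whole half-line $x>b$.
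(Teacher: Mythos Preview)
Your proof is correct and follows essentially the same route as the paper: use the affine form $V'=-c^{-}$, $V''=0$ on $(b,\infty)$, reduce to the boundary point via monotonicity in $x$, and invoke the ODE~\eqref{eq:HJB_2} at $b^{-}$ together with $V''(b^{-})\le 0$ to get the required inequality. The only cosmetic difference is that you first minimize over $u$ (landing at $u=1$) and then use $\mathcal{L}^{1}V-rV=0$ at $b^{-}$, whereas the paper keeps $u$ generic and uses $\mathcal{L}^{u}V-rV\ge 0$ at $b^{-}$ from Proposition~\ref{qvicont0b}; both arguments are equivalent.
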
 \begin{proof} It
is sufficient to show that for any $0\le u\le1$ and any $x>b$, 
\begin{equation}
f(x)\equiv\frac{1}{2}u^{2}\sigma^{2}\frac{d^{2}V(x)\left(x\right)}{dx^{2}}+u\mu\frac{dV(x)\left(x\right)}{dx}-rV(x)\ge0.\label{qvigrb}
\end{equation}
 From Proposition \ref{qvicont0b} we know that (\ref{qvigrb}) is
true for $x=b-$. As $V''(b-)<0$, we obtain $\left(\mathcal{L}^{u}V\right)\left(b+\right)-rV(b+)=u\mu c^{-}-rV(b)\ge0$.
For any $x>b$, we have $\left(\mathcal{L}^{u}V\right)\left(x\right)-rV(x)=u\mu c^{-}-rV(x)>c^{-}-V(b)$
because $V$ is a decreasing function. \end{proof} This completes
the proof of Theorem \ref{hjbproof}.

\subsubsection{The case of $K^{+}>\overline{K^{+}}$}

When $K^{+}>\overline{K^{+}}$ we cannot find any $S^{*}$ such that
(\ref{JK+}) is satisfied. In this case, we set $V(x)=v(x)$ (that
is, we have $S^{*}=0$, which corresponds to $A=0$). \begin{theor}\label{verifineqtheor}
If $K^{+}>\overline{K^{+}}$, then $V(x)=v(x)$ is a solution to the
QVI (\ref{eq:QVI 1})-(\ref{eq:QVI 3}). \end{theor}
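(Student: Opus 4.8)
The plan is to mimic the proof of Theorem \ref{hjbproof} (the case $K^{+}\le\overline{K^{+}}$) but with $S^{*}=0$, noting that here $V=v$ is exactly the auxiliary value function, so most of the work has already been done when $v$ was constructed. Concretely, I would re-run the three propositions (\ref{qvicont0b}), (\ref{qvim1m2}), (\ref{qvicontgrb}) with $A=0$, $x_{0}=\tilde{x}_{0}$, $b=\tilde{b}$, $B=\tilde{B}$, and check that each argument still goes through, paying attention only to the places where $A=0$ actually changes something.

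First I would verify \eqref{eq:QVI 1}, i.e. $\mathcal{L}^{u}V-rV\ge0$ for all $u\in[0,1]$ and all $x\ge0$. On $(0,\tilde{x}_{0}]$ and on $[\tilde{x}_{0},\tilde{b})$ the argument of Proposition \ref{qvicont0b} applies verbatim: $v$ satisfies \eqref{eq:HJB 1} on the first interval (where $-\mu v'/(\sigma^{2}v'')<1$ by construction) and \eqref{eq:HJB_2} on the second, and the reduction to \eqref{equivHJBineq} together with $v'<0$, $v''<0$ decreasing gives the inequality. For $x\ge\tilde{b}$, the argument of Proposition \ref{qvicontgrb} carries over: $v''(\tilde b-)<0$ forces $(\mathcal{L}^{u}v)(\tilde b+)-rv(\tilde b+)=u\mu c^{-}-rv(\tilde b)\ge0$, and since $v$ is decreasing this persists for all $x>\tilde b$.

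Next I would verify \eqref{eq:QVI 2}, $\mathcal{M}V\ge V$, again splitting $\mathcal{M}=\min(\mathcal{M}_{1},\mathcal{M}_{2})$. The $\mathcal{M}_{2}$ part is unchanged from part 2$^{\circ}$ of Proposition \ref{qvim1m2}: the only fact used is $K^{-}=\int_{\tilde B}^{\tilde b}(H^{*}(y)+c^{-})\,dy$, which is exactly how $\tilde b,\tilde B$ were defined. The $\mathcal{M}_{1}$ part is where $A=0$ matters and is the one genuinely new point: since now $K^{+}>\overline{K^{+}}=\int_{0}^{\bar A}(-c^{+}-H^{*}(y))\,dy$ with no shift, I must show that for every $x>0$ and every $\xi>0$, $K^{+}+c^{+}\xi+v(x+\xi)\ge v(x)$, i.e. $\int_{x}^{x+\xi}(-c^{+}-v'(y))\,dy\le K^{+}$ — wait, more precisely I need $v(x)-v(x+\xi)-c^{+}\xi\le K^{+}$, which reads $\int_{x}^{x+\xi}(-v'(y)-c^{+})\,dy\le K^{+}$. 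Because $-v'=-H^{*}$ only exceeds $c^{+}$ on the interval $(0,\bar A)$ (recall $H^{*}$ is increasing then decreasing, with $H^{*}<-c^{+}$ precisely on $(0,\bar A)$), the left side is maximized by taking $x\downarrow0$ and $x+\xi\ge\bar A$, giving at most $\int_{0}^{\bar A}(-c^{+}-H^{*}(y))\,dy=\overline{K^{+}}<K^{+}$. Hence $\mathcal{M}_{1}V(x)>V(x)$ strictly for all $x>0$, and at $x=0$ one checks directly that $\mathcal{M}_{1}V(0)\ge V(0)=0$ (the infimum over $\xi>0$ of $K^{+}+c^{+}\xi+v(\xi)$ is nonnegative since $v\ge$ some bound; in fact it equals $K^{+}+\inf_{\xi>0}(c^{+}\xi+v(\xi))\ge 0$ because $v(\xi)\ge v(0)-$ a bounded amount, or more cleanly because $-c^{+}\xi-v(\xi)=\int_0^\xi(-c^+-v'(y))dy\le \overline{K^+}<K^+$).

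Finally, the tightness condition \eqref{eq:QVI 3} follows for free once the other two hold: on $(0,\tilde b)$ we have $\min_{u}(\mathcal{L}^{u}V-rV)=0$ by the HJB construction, and on $\{0\}\cup[\tilde b,\infty)$ we have just shown $\mathcal{M}V-V=0$ (at $0$ via the computation of $\mathcal{M}_2 V(0)$, which equals $V(0)$ by definition of $\tilde b,\tilde B$ and $K^-$, mirroring \eqref{V2b}; actually here I should double-check that $\mathcal{M}V(0)=V(0)$ really holds — it does, because $\mathcal{M}_2 V(0)=v(\tilde B)-c^-(\,-\tilde B)+K^-$... no: $\mathcal M_2 V(0)=\inf_{\xi>0}[K^-+c^-\xi+v(-\xi)]$, which is vacuous since $v$ is only defined on $[0,\infty)$; so at $x=0$ only $\mathcal M_1$ is relevant and $\mathcal M_1 V(0)=V(0)$ would need $\bar A=0$, i.e. $S^*=0$ reaching $\bar A$, which is false here — so in fact $\mathcal{M}V(0)>V(0)$). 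This forces $\min_u(\mathcal{L}^u V-rV)(0)=0$, which indeed holds since $0\in(0,\tilde b)$'s closure and the HJB equation holds up to the boundary. The main obstacle is therefore purely the $\mathcal{M}_{1}$ estimate at and near $x=0$: one must be careful that with no shift the ``curvilinear triangle'' area is $\overline{K^{+}}$, strictly below $K^{+}$, so that raising the reserve is never optimal and $0$ is genuinely an absorbing (bankruptcy) point rather than an intervention point — everything else is a transcription of the previous case.
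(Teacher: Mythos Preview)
Your approach is correct and mirrors the paper's proof almost exactly: the paper states that Propositions \ref{qvicont0b} and \ref{qvicontgrb} carry over verbatim, and your $\mathcal{M}_{1}$ estimate via $\int_{0}^{\bar A}(-c^{+}-H^{*})=\overline{K^{+}}<K^{+}$ is precisely the content of the paper's modified Proposition \ref{qvim1m2m}. One clarification worth making: in the paper's analysis of $\mathcal{M}_{1}$ the symbol $A$ refers to $\bar{A}$ (the unique point with $V'(\bar A)=-c^{+}$, which is the minimizer of $\xi\mapsto c^{+}\xi+v(\xi)$), not to $0$ as in your opening line; and your eventual conclusion that $\mathcal{M}V(0)>V(0)$, so that tightness at $x=0$ must come from the HJB side rather than from intervention, is exactly what the paper records in the remark following Proposition \ref{qvim1m2m}.
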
 To prove this
theorem, it is sufficient to prove Propositions \ref{qvicont0b}-\ref{qvicontgrb}.
The proofs of Propositions \ref{qvicont0b} and \ref{qvicontgrb}
are identical to the case of $K^{+}\leq\overline{K^{+}}$, whereas
that of Proposition \ref{qvim1m2} requires a slight modification.
\begin{prop}\label{qvim1m2m} For each $x\ge0$, 
\begin{equation}
\mathcal{M}V(x)\ge V(x).\label{MVgeneral1}
\end{equation}
 If $x\ge b$, then 
\begin{equation}
\mathcal{M}V(x)=V(x).\label{MVequal1}
\end{equation}
 \end{prop}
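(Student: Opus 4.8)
The plan is to mimic the proof of Proposition \ref{qvim1m2}, paying attention to where the argument used $K^+ \le \overline{K^+}$ --- namely, only in the part concerning $\mathcal{M}_1$. Recall that $\mathcal{M}V(x) = \min(\mathcal{M}_1 V(x), \mathcal{M}_2 V(x))$, so it suffices to show $\mathcal{M}_1 V(x) \ge V(x)$ and $\mathcal{M}_2 V(x) \ge V(x)$ for all $x \ge 0$, with equality in $\mathcal{M}_2$ (hence in $\mathcal{M}$) when $x \ge b$. The treatment of $\mathcal{M}_2$ is verbatim the same as in $2^\circ$ of the proof of Proposition \ref{qvim1m2}: since here $V = v$ is built from $H^*$ exactly as before on $[0,\bar b]$ (just without the leftward shift), $V'(y) > -c^-$ for $y > B = \bar B$ and $V'(y) < -c^-$ for $y < B$, so $V(x-\xi) - c^-\xi$ has its unique minimum at $x - \xi = B$, and the identity $K^- = \int_{\bar B}^{\bar b}(H^*(y)+c^-)\,dy$ gives $\mathcal{M}_2 V(x) - V(x) \ge 0$ for $B \le x < b$, with $\mathcal{M}_2 V(x) - V(x) = 0$ for $x \ge b$ just as in \eqref{V2b}. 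For $0 \le x < B$ the same monotonicity of $V'$ shows the minimizer in $\mathcal{M}_2$ is still at $x-\xi = B$ and the integral bound only improves.

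The one genuinely new point is $\mathcal{M}_1 V(x) \ge V(x)$ when $A = 0$, i.e.\ $S^* = 0$. Now $V = v$, and by construction $V'(x) = H^*(x)$ with $\lim_{x \downarrow 0} H^*(x) = -\infty$; in particular $V'(x) < -c^+$ for all $x > 0$ small, and since $H^*$ is concave on $[0,\bar b]$, increasing on $(0,\bar x]$, one has $V'(x) = -c^+$ at exactly one point, namely $\bar A$, with $V'(x) > -c^+$ for $x > \bar A$. Wait --- here I must be careful: in this case the relevant comparison is that $H^*(x) < -c^+$ precisely on the interval $(0, \bar A)$, because $\overline{K^+} = \int_0^{\bar A}(-c^+ - H^*(x))\,dx < K^+$ is exactly the hypothesis. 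For any $x \ge 0$ and $\xi > 0$, write $\mathcal{M}_1 V(x) - V(x) = K^+ + c^+ \xi + V(x+\xi) - V(x) = K^+ + \int_x^{x+\xi}(c^+ + V'(y))\,dy$. The infimum over $\xi > 0$ of $\int_x^{x+\xi}(c^+ + V'(y))\,dy$ is attained either at $\xi \to 0^+$ (value $0$) or, since $c^+ + V'(y) < 0$ exactly on $(0,\bar A)$, at $x + \xi = \bar A$ when $x < \bar A$ (value $\int_x^{\bar A}(c^+ + V'(y))\,dy \ge -\overline{K^+} > -K^+$), and at $\xi \to 0^+$ when $x \ge \bar A$. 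In either case $\inf_{\xi>0}\int_x^{x+\xi}(c^+ + V'(y))\,dy \ge -\overline{K^+} > -K^+$, hence $\mathcal{M}_1 V(x) > V(x)$ for every $x \ge 0$. In particular the strict inequality rules out $\mathcal{M}_1$ ever producing equality, consistent with $A = 0$ meaning no upward impulse is ever exercised.

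Combining, $\mathcal{M}V(x) = \min(\mathcal{M}_1 V(x), \mathcal{M}_2 V(x)) \ge V(x)$ for all $x \ge 0$, proving \eqref{MVgeneral1}; and for $x \ge b$ we have $\mathcal{M}_1 V(x) > V(x) = \mathcal{M}_2 V(x)$, so $\mathcal{M}V(x) = \mathcal{M}_2 V(x) = V(x)$, proving \eqref{MVequal1}. The main thing to watch is the correct identification of the sign of $c^+ + H^*$: it is negative on $(0,\bar A)$ and positive beyond, and the defining property of $\overline{K^+}$ together with the case hypothesis $K^+ > \overline{K^+}$ is precisely what gives the strict inequality $-\overline{K^+} > -K^+$; no other inequality in the argument is delicate, and the rest is a transcription of the proof of Proposition \ref{qvim1m2}.
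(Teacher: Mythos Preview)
Your argument is correct and mirrors the paper's own proof: the $\mathcal{M}_2$ part is carried over verbatim from Proposition \ref{qvim1m2}, and for $\mathcal{M}_1$ you (like the paper) locate the minimizer at $x+\xi=\bar A$ and bound the resulting integral by $\overline{K^+}<K^+$, yielding the strict inequality $\mathcal{M}_1 V(x)>V(x)$. One minor slip to fix: for $0\le x<B$ you cannot have the $\mathcal{M}_2$-minimizer at $x-\xi=B$ (that would require $\xi<0$); the infimum there is actually at $\xi\to 0^+$, giving $\mathcal{M}_2 V(x)=K^-+V(x)>V(x)$, which is even simpler.
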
 \begin{proof} The proof that $\mathcal{M}_{2}V(x)\ge V(x)$
for all $x$ and that $\mathcal{M}_{2}V(x)=V(x)$ for $x\ge b$ is
the same as that in Proposition \ref{qvim1m2}.

%If $x\le x^*\equiv\hat {x}$ then $M_{1}V(x)\ge V(x)$ is equivalent to
If $x\le A$, then $\mathcal{M}_{1}V(x)\ge V(x)$ is equivalent to
\[
-c^{+}(A-x)-V(A)-V(x)\equiv\int_{x}^{A}[-c^{+}-V(y)]dy\le K^{+}.
\]
 The foregoing inequality is always true because 
\[
\int_{x}^{A}[-c^{+}-V(y)]dy\le\int_{0}^{A}[-c^{+}-V(y)]dy=\overline{K^{+}}<K^{+}
\]
 by assumption.\\
 If $x>A$, then $\mathcal{M}_{1}V(x)\ge V(x)$ still holds, due to
the same argument as that in Proposition \ref{qvim1m2}. \end{proof}
\begin{remar} In the case of $K^{+}>\overline{K^{+}}$ we have $\mathcal{M}V(x)=V(x)$
only for $x\ge b$, whereas $\mathcal{M}V(0)>V(0)$ in contrast to
the case when $K^{+}\le\overline{K^{+}}$. Also, when $K^{+}>\overline{K^{+}}$,
we have $V(0)=0$, whereas $V(0)<0$ if $K^{+}\le\overline{K^{+}}$.
Equivalently, in the case that the fixed cost to call for additional
funds is relatively large (i.e., $K^{+}>\overline{K^{+}})$ , the
optimal band control is reduced to $(0,0;B,b)$ with $a=A=0$. That
is, as soon as the reserve reaches zero, it becomes optimal for the
mutual insurance firm to go bankrupt, rather than to be restarted
by calling for additional funds. \end{remar}

\section{Verification Theorem and the Optimal Control}

\begin{theor}\label{the:verification} If $V$ is a solution to QVI
(\ref{eq:QVI 1})-(\ref{eq:QVI 3}), then for any control $U$, 
\begin{equation}
V(x)\le C(x,U).\label{ineqtheorem}
\end{equation}
 \end{theor}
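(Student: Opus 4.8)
The plan is to prove the verification inequality \eqref{ineqtheorem} by the standard stochastic-control argument: apply a suitable It\^o / change-of-variables formula to the discounted process $e^{-rt}V(X(t))$ along an arbitrary admissible control $U$, use the QVI to show that the resulting drift terms and jump contributions are all of the favourable sign, and then let $t\to\infty$ (or $t\uparrow\tau$) and take expectations. Throughout I would assume $V$ is $C^2$ on $(0,\infty)$ except possibly at the two switching points $x_0$ and $b$, where it is $C^1$ with bounded second derivative on each side, so that the It\^o formula applies piecewise (or after a routine mollification argument).

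First I would fix $U=\{u(t),(\tau_i),(\xi_i)\}\in\mathcal U$ and $X(0)=x$, and write $X(t)$ as in \eqref{dynamics_control}. Between consecutive impulse times the process is a diffusion with generator $\mathcal L^{u(t)}$, so applying It\^o to $e^{-rt}V(X(t))$ on each such interval gives
\begin{align}
e^{-r(t\wedge\tau)}V(X(t\wedge\tau)) &= V(x) + \int_0^{t\wedge\tau} e^{-rs}\bigl(\mathcal L^{u(s)}V - rV\bigr)(X(s))\,ds \nonumber\\
&\quad + \int_0^{t\wedge\tau} e^{-rs} u(s)\sigma V'(X(s))\,dW_s + \sum_{\tau_i\le t\wedge\tau} e^{-r\tau_i}\bigl(V(X(\tau_i)) - V(X(\tau_i^-))\bigr). \nonumber
\end{align}
By \eqref{eq:QVI 1} the first integrand is $\ge 0$. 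For the jump terms, $X(\tau_i)=X(\tau_i^-)+\xi_i$, and \eqref{eq:QVI 2} gives $V(X(\tau_i^-)+\xi_i)+g(\xi_i)\ge \mathcal MV(X(\tau_i^-))\ge V(X(\tau_i^-))$, hence $V(X(\tau_i))-V(X(\tau_i^-))\ge -g(\xi_i)$. Substituting these two bounds and rearranging yields
\[
V(x) \le e^{-r(t\wedge\tau)}V(X(t\wedge\tau)) + \sum_{\tau_i\le t\wedge\tau} e^{-r\tau_i} g(\xi_i) - \int_0^{t\wedge\tau} e^{-rs} u(s)\sigma V'(X(s))\,dW_s.
\]
Taking expectations kills the stochastic integral (after the usual localization by a sequence of stopping times, using that $V'$ is bounded on any bounded region and that $u\le 1$), and it remains to pass to the limit $t\to\infty$.

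The main obstacle is the limiting argument and the handling of ruin: I must show $\liminf_{t\to\infty} E_x\bigl[e^{-r(t\wedge\tau)}V(X(t\wedge\tau))\bigr]\ge 0$ so that it drops out of the inequality in the right direction. On $\{\tau<\infty\}$, at time $\tau$ the reserve is at $0$ (by \eqref{tau} and left-continuity from admissible interventions, $X(\tau^-)=0$), and $V(0)\le 0$ with $V$ decreasing, but in fact one needs $V(X(t\wedge\tau))$ to be controlled from below uniformly; since $V$ is decreasing and bounded below on $[0,\infty)$ by its limiting linear piece $V(b)-c^-(x-b)$, one gets $e^{-r(t\wedge\tau)}V(X(t\wedge\tau))\ge e^{-r(t\wedge\tau)}\bigl(V(b)-c^-(X(t\wedge\tau)-b)\bigr)$, and a moment bound on $X$ together with $r>0$ forces this to have nonnegative liminf. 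Combined with Fatou (or monotone convergence, since $g\ge -c^-|\xi|$ and the admissibility condition \eqref{admis} bounds the negative part), $E_x\sum_{\tau_i\le\tau}e^{-r\tau_i}g(\xi_i)\to C(x,U)$, and we conclude $V(x)\le C(x,U)$. I would remark that the inequality is trivial when $C(x,U)=+\infty$, so only admissible $U$ with finite cost need the full argument, and that the tightness condition \eqref{eq:QVI 3} is not needed here — it is only relevant for the reverse inequality establishing optimality of the conjectured band control.
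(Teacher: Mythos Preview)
Your overall approach---apply It\^o between intervention times, use QVI \eqref{eq:QVI 1} for the drift and \eqref{eq:QVI 2} for the jumps, take expectations, and pass to the limit---is exactly the route the paper takes. However, you have the direction of the terminal term backwards. After taking expectations you arrive at
\[
V(x)\;\le\;E\bigl[e^{-r(t\wedge\tau)}V(X(t\wedge\tau))\bigr]\;+\;E\!\!\sum_{\tau_i\le t\wedge\tau}e^{-r\tau_i}g(\xi_i),
\]
and to conclude $V(x)\le C(x,U)$ you need
\[
\limsup_{t\to\infty} E\bigl[e^{-r(t\wedge\tau)}V(X(t\wedge\tau))\bigr]\;\le\;0,
\]
not $\liminf\ge 0$. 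Your argument via the linear lower bound $V(y)\ge V(b)-c^{-}(y-b)$ establishes the latter, which is the wrong inequality for this step: a nonnegative limit on the right would \emph{weaken}, not strengthen, the bound you want. The correct (and much simpler) observation is the one you mention in passing and then set aside: since $V$ is decreasing with $V(0)\le 0$, we have $V\le 0$ on $[0,\infty)$, so the terminal expectation is $\le 0$ for every $t$ and can be dropped. This is precisely what the paper does: it identifies the limit as $E[e^{-r\tau}V(X(\tau))\1_{\tau<\infty}]$ and then uses $V(X(\tau))\le 0$.

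Apart from this sign confusion the proposal is sound and matches the paper. Your lower-bound estimate is not wasted---it is what one would use to show that the terminal expectation actually converges (to $0$ on $\{\tau=\infty\}$), which the paper records as \eqref{limiinfty}---but for the bare inequality \eqref{ineqtheorem} only the nonpositivity of $V$ is needed. One further point the paper flags and you should too: when $K^{+}\ge\overline{K^{+}}$ the derivative $V'$ has a singularity at $0$, so the stochastic integral is not automatically a true martingale; the paper handles this by replacing $\tau$ with $\tau^{\epsilon}=\inf\{t:X(t)<\epsilon\}$ and letting $\epsilon\downarrow 0$.
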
 \begin{proof} We prove this inequality when $K^{+}\le\overline{K^{+}}$.
In this case, $V'(0)=v'(S^{*})>-\infty$. Let $U$ be any admissible
control defined by (\ref{contU}) and process $X(t)$ be the corresponding
surplus process (\ref{dynamics_control}), with $X(0)=x$. Let $\tau$
be its ruin time given by (\ref{tau}). Let $\tau(t)=\tau\wedge t$
and $\tau_{i}(t)=\tau_{i}\wedge\tau\wedge t$. Then,

\begin{eqnarray}
e^{-r\tau(t)}V(X(\tau(t)))-V(x) & = & \sum_{i=1}^{\infty}\left[e^{-r\tau_{i}(t)}V\left(X\left(\tau_{i}(t)-\right)\right)-e^{-r\tau_{i-1}}(t)V\left(X\left(\tau_{i-1}(t)\right)\right)\right]\nonumber \\
 &  & +\sum_{i=1}^{\infty}e^{-r\tau_{i}(t)}\left[V\left(X\left(\left(\tau_{i}(t)\right)\right)\right)-V\left(X\left(\tau_{i}(t)-\right)\right)\right].\label{exprepres}
\end{eqnarray}

By convention, $\tau_{0}=0$. In view of (\ref{eq:QVI 2}), we have
$|V(x)-V(y)|(c^{+}|y-x|+K^{+})\1_{y<x}+(K^{-}-c^{-}|y-x|)\1_{y>x}$.
Therefore, $|V(X(\tau_{i}(t)))-V(X(\tau_{i}(t)-))|(c^{+}\xi_{i}^{+}+K^{+})\1_{\xi_{i}>0}+(K^{-}-c^{-}\xi_{i}^{-})\1_{\xi<0}$
and

\begin{eqnarray}
\left|\sum_{i=1}^{\infty}e^{-r\tau_{i}(t)}\left[V\left(X\left(\tau_{i}(t)\right)\right)-V\left(X\left(\tau_{i}(t)-\right)\right)\right]\right| & \le & \sum_{i=1}^{\infty}e^{-r\tau_{i}(t)}\left|V\left(X\left(\tau_{i}(t)\right)\right)-V\left(X\left(\tau_{i}(t)-\right)\right)\right|\label{diffVineq}\\
 & \le & \sum_{i=1}^{\infty}e^{-r\tau_{i}(t)}\left[(c^{+}\xi_{i}^{+}+K^{+})\1_{\xi_{i}>0}+(K^{-}-c^{-}\xi_{i}^{-})\1_{\xi_{i}<0}\right]\nonumber \\
 & \le & \sum_{\tau_{i}\le\tau}e^{-r\tau_{i}}\left[(c^{+}\xi_{i}^{+}+K^{+})\1_{\xi_{i}>0}+(K^{-}-c^{-}\xi_{i}^{-})\1_{\xi_{i}<0}\right]\nonumber 
\end{eqnarray}

In view of (\ref{admis}), this implies that for any $t$ the second
sum in (\ref{exprepres}) is bounded by the same integrable random
variable independent of $t$.

On $[\tau_{i-1},\tau_{i}]$, process $X(t)$ is continuous, and we
can apply Ito's formula to get

\begin{eqnarray}
e^{-r\tau_{i}(t)}V\left(X\left(\tau_{i}(t)-\right)\right)-e^{-r\tau_{i-1}}(t)V\left(X\left(\tau_{i-1}(t)\right)\right) & = & \int_{\tau_{i-1}(t)}^{\tau_{i}(t)}e^{-rs}\left[\mathcal{L}^{u(s)}V\left(X(s)\right)-rV\left(X(s)\right)\right]ds\nonumber \\
 &  & +\int_{\tau_{i-1}(t)}^{\tau_{i}(t)}e^{-rs}V'\left(X(s)\right)u(s)\sigma dW(s).\label{ineqVcontseg}
\end{eqnarray}

From this equation, using (\ref{admis}) and standard but rather tedious
arguments, we can deduce that the first sum in (\ref{diffVineq})
for all $t$ is also bounded by the same integrable random variable.
Similar arguments show that

\begin{equation}
\lim_{t\to\infty}E[e^{-r\tau(t)}V(X(\tau(t)))\1_{\tau=\infty}]=0\label{limiinfty}
\end{equation}
 and 
\begin{equation}
\lim_{t\to\infty}E[e^{-r\tau(t)}V(X(\tau(t)))]=E[e^{-r\tau}V(X(\tau))\1_{\tau<\infty}].\label{limiinfty1}
\end{equation}
 (see Cadenillas \emph{et al.} \cite{Cadenillas2006-181}). Note that
the second integral on the right-hand side of (\ref{ineqVcontseg})
is a martingale whose expectation vanishes. However, in view of (\ref{eq:QVI 1}),
the integrand in the first integral of (\ref{ineqVcontseg}) is nonnegative.
Therefore, 
\begin{equation}
E[e^{-r\tau_{i}(t)}V(X(\tau_{i}(t)-))-e^{-r\tau_{i-1}}(t)V(X(\tau_{i-1}(t)))]>0,\label{ineqVcontsegpos}
\end{equation}
 and, taking into account the dominated convergence theorem, we can
see that the expectation of the first sum on the right-hand side of
(\ref{exprepres}) is nonnegative.

From (\ref{eq:QVI 2}), we can see that $V(X(\tau_{i}))-V(X(\tau_{i}-))\equiv V(X(\tau_{i}-)+\xi_{i})-V(X(\tau_{i}-))\ge-[(K^{+}+c^{+}\xi_{i})\1_{\xi_{i}>0}+(K^{-}-c^{-}\xi_{i})\1_{\xi_{i}<0}]$.
Substituting this inequality into (\ref{exprepres}) and taking expectations
of both sides, we obtain 
\begin{equation}
E[e^{-r\tau(t)}V(X(\tau(t)))]-V(x)\ge-E\sum_{\tau_{i}\le\tau,\tau_{i}\le t}e^{-r\tau_{i}}[(K^{+}+c^{+}\xi_{i}^{+})\1_{\xi_{i}>0}+(K^{-}-c^{-}\xi_{i}^{-})\1_{\xi_{i}<0}]\label{ineqprelim}
\end{equation}
 Letting $t\to\infty$, and employing (\ref{limiinfty1}) 
and the monotone convergence theorem on, we get 
\begin{equation}
E[e^{-r\tau}V(X(\tau))1_{\tau<\infty}]-V(x)\ge-E\sum_{\tau_{i}\le\tau}e^{-r\tau_{i}}[(K^{+}+c^{+}\xi_{i}^{+})\1_{\xi_{i}>0}+(K^{-}-c^{-}\xi_{i}^{-})\1_{\xi_{i}<0}]=-C(x,U)\label{ineqverfinal}
\end{equation}
 As $V(X(\tau))\le0$, on $\{\tau<\infty\}$,
the  inequality (\ref{ineqverfinal}) implies \eqref{ineqtheorem}
\end{proof} \begin{remar} For the expectation of the stochastic
integral on the right hand side of (\ref{ineqVcontseg}) to vanish,
it is sufficient for its integrand to be bounded. In particular, it
is sufficient for $V'$ to be bounded. This is the case when $K^{+}<\overline{K^{+}}$.
When $K^{+}\ge \overline{K^{+}}$, the function $V'$ has a singularity
at 0. We can, however, apply the same technique, first replacing $\tau$
by $\tau^{\epsilon}=\inf\{t:X(t)<\epsilon\}$ and then passing to
a limit as $\epsilon\to0$. This will yield inequality (\ref{ineqprelim}),
which is all we need for the proof of Theorem \ref{the:verification}
\end{remar}

Let $\mathcal{H}^{*}(x)=\arg\min_{u}\{\mathcal{L}^{u}V(x)-rV(x)\}$,
that is 
\begin{equation}
\mathcal{L}^{{\mathcal{H}}^{*}(x)}V(x)-rV(x)=0\label{equalL}
\end{equation}
 From (\ref{defv1}),(\ref{Hdef}), (\ref{Vdef1}), and (\ref{defx0f}),
we can see that 
\[
\mathcal{H}^{*}(x)=\min\{x/(x_{0}+S^{*}),1\}.
\]
 Consider the process defined as 
\begin{equation}
X^{*}(t)=X(\tau_{\tau_{i}})+\int_{\tau_{i}}^{t}\mu\mathcal{H}^{*}(X^{*}(s))ds+\int_{\tau_{i}}^{t}\sigma\mathcal{H}^{*}(X^{*}(s))dW_{s}\label{defoptproc}
\end{equation}
 $i=0,1,...$, with $\tau_{0}\equiv0$, $X(\tau_{0})\equiv X(0)=x$,
and $\tau_{i}$ and $X(\tau_{i})$ are defined below sequentially.

If $\overline{K^{+}}<K^{+}$(that is, no $S^{*}$ exists, such that
(\ref{JK+}) holds), then 
\begin{align}
\tau_{i}=\inf\{t:X^{*}(t)\ge b\}\label{tauforv}\\
X^{*}(\tau_{i})=B,\,\,\,\xi_{i}=b-B.\label{xtauforv}
\end{align}
 If $\overline{K^{+}}\ge K^{*}$ (that is, there exists an $S^{*}$,
such that (\ref{JK+}) holds), then 
\begin{align}
\tau_{i}=\inf\{t:X^{*}(t)\not\in[0,b]\}\label{tauforV}\\
X^{*}(\tau_{i})=\begin{cases}
B,\,\,\,\xi_{i}=b-B & X^{*}(\tau_{i}-)=b,\\
A,\,\,\,\xi_{i}=A & X^{*}(\tau_{i}-)=0.
\end{cases}\label{xtauforV}
\end{align}
 \begin{remar} Informally, if $\overline{K^{+}}<K^{+}$, then process
$X^{*}(s)$ is a continuous diffusion process with a drift and diffusion
coefficient of, $\mu\mathcal{H}^{*}(X^{*}(s))$ and $\sigma\mathcal{H}^{*}(X^{*}(s))$,
respectively, until the times of intervention. The times of intervention
in this case are the times at which this diffusion process hits the level
$b$, which are associated with the refunds of the constant
amount of $b-B$. The time when $0$ is hit is the ruin time.

When $\overline{K^{+}}\ge K^{+}$, the process $X^{*}(s)$ is a continuous
diffusion process with the same drift and diffusion coefficients as
above, between the times of intervention. The intervention times
are the times at which this process reaches either $0$ or $b$. At
point $0$, the control is set to displace the process to point $A$,
which corresponds to raising cash (making a call to shareholders)
in the amount of $A$. Reaching the level $b$ results in the displacement
of the process to the point $B$ which, corresponds to making a refund
in the amount of $b-B$. \end{remar} \begin{theor} \textbf{(The
verification theorem)} Let $U^{*}$ be the control described by (\ref{equalL})
and (\ref{tauforv})-(\ref{xtauforV}). Then, 
\[
C(x,U^{*})=V(x)=\inf_{U\in{\mathcal{U}}}C(x,U)
\]
 \end{theor}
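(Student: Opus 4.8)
The plan is to show the two inequalities $C(x,U^*)\le V(x)$ and $C(x,U^*)\ge V(x)$; combined with Theorem \ref{the:verification}, which already gives $V(x)\le C(x,U)$ for every admissible $U$, this forces $C(x,U^*)=V(x)=\inf_{U}C(x,U)$. So the real content is the reverse inequality $C(x,U^*)\le V(x)$, and for this I would first verify that $U^*$ is in fact admissible: the interventions happen only at $0$ and $b$, the displacements are to $A$ and $B$ with $0\le A<B<b$, so the positivity condition (\ref{positivity}) holds trivially, and the integrability condition (\ref{admis}) will follow from the geometric-ergodicity / exponential-moment estimates for the regulated diffusion (\ref{defoptproc}) — the intervention times $\tau_i$ form a renewal-type sequence with inter-arrival times bounded below in distribution, so the discounted sum converges.

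The core computation is to run the same It\^o-formula argument as in the proof of Theorem \ref{the:verification}, but now observing that every inequality used there becomes an \emph{equality} for the control $U^*$. Concretely, on each continuity segment $[\tau_{i-1},\tau_i]$ I would apply It\^o's formula to $e^{-rs}V(X^*(s))$ exactly as in (\ref{ineqVcontseg}); because $u(s)=\mathcal H^*(X^*(s))$ is the pointwise minimizer, (\ref{equalL}) gives $\mathcal L^{u(s)}V(X^*(s))-rV(X^*(s))=0$ on $(0,b)$, so the drift integral vanishes identically rather than merely being nonnegative. At each intervention time $\tau_i$ the jump is $\xi_i=A$ from state $0$ (when $K^+\le\overline{K^+}$) or $\xi_i=b-B$ from state $b$; by the boundary relations (\ref{Vvarequal}) and (\ref{VvarequalB}) — equivalently (\ref{Vl0}) and (\ref{Vprb}) — we get $V(X^*(\tau_i))-V(X^*(\tau_i-))=-g(\xi_i)$ exactly, so the inequality coming from (\ref{eq:QVI 2}) in the proof of Theorem \ref{the:verification} is tight. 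Summing (\ref{exprepres}) over $i$, taking expectations, and letting $t\to\infty$ using (\ref{limiinfty}) and (\ref{limiinfty1}) then yields
\[
V(x)=E\!\left[e^{-r\tau}V(X^*(\tau))\1_{\tau<\infty}\right]+E\sum_{\tau_i\le\tau}e^{-r\tau_i}g(\xi_i)\1_{\tau_i\le t}\Big|_{t\to\infty}.
\]
In the case $K^+\le\overline{K^+}$ the optimal policy keeps the process strictly inside $[0,b]$ forever, so $\tau=\infty$ a.s. and the boundary term vanishes, giving $V(x)=C(x,U^*)$ directly; in the case $K^+>\overline{K^+}$ we have $A=0$, the process is absorbed at $0$, $V(0)=0$, and again the boundary term $E[e^{-r\tau}V(X^*(\tau))\1_{\tau<\infty}]=E[e^{-r\tau}\cdot 0]=0$ vanishes, so $V(x)=C(x,U^*)$ once more. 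Either way $C(x,U^*)\le V(x)$, and in fact equality.

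The main obstacle, as usual in verification theorems of this type, is the analytic bookkeeping at infinity: justifying the interchange of limit, expectation and infinite sum, i.e.\ establishing the uniform-integrability / dominated-convergence hypotheses that make (\ref{limiinfty})--(\ref{limiinfty1}) and the monotone convergence step legitimate for the \emph{specific} process $X^*$. This requires controlling $E[e^{-r\tau_i}]$ and $E\sum_i e^{-r\tau_i}|\xi_i|$, which in turn needs a lower bound on the time between successive hits of $b$ by the regulated diffusion; one gets this from the fact that $\mathcal H^*$ is bounded, so $X^*$ has bounded drift and diffusion coefficients and the hitting time of a fixed level has a nondegenerate distribution with a negative exponential moment. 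A secondary subtlety is the singularity of $V'$ at $0$ when $K^+\ge\overline{K^+}$: as noted in the remark following Theorem \ref{the:verification}, one handles this by first stopping at $\tau^\epsilon=\inf\{t:X^*(t)<\epsilon\}$, carrying out the argument there, and passing to the limit $\epsilon\downarrow0$, using $V(\epsilon)\to V(0)=0$. Once these estimates are in place the proof closes: the equalities accumulated along the way turn the one-sided verification bound into the identity $C(x,U^*)=V(x)$, and combined with Theorem \ref{the:verification} this is precisely the assertion that $U^*$ is optimal and $V$ is the value function.
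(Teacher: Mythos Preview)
Your proposal is correct and follows essentially the same route as the paper: invoke Theorem~\ref{the:verification} for the inequality $V(x)\le C(x,U)$, then rerun the It\^o decomposition (\ref{exprepres})--(\ref{ineqVcontseg}) for the specific control $U^*$, observing that (\ref{equalL}) makes the drift integral vanish and that the boundary identities (\ref{Vvarequal}), (\ref{VvarequalB}) turn the jump inequalities into equalities, so that (\ref{ineqprelim}) becomes (\ref{ineqverfinalver}); finally, the boundary term is killed either because $\tau=\infty$ a.s.\ (when $K^+\le\overline{K^+}$) or because $V(0)=0$ (when $K^+>\overline{K^+}$). Your additional discussion of admissibility of $U^*$ and of the uniform-integrability/localisation issues is more explicit than the paper's own proof, which simply refers back to the estimates already established in the proof of Theorem~\ref{the:verification}, but the logical skeleton is identical.
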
 \begin{proof} In view of (\ref{ineqtheorem}), it is
sufficient to show that 
\begin{equation}
C(x,U^{*})=V(x).\label{verfinal}
\end{equation}
 Equality (\ref{equalL}) shows that 
\begin{equation}
\mathcal{L}^{{\mathcal{H}}^{*}(X^{*}(s))}V(X^{*}(s))-rV(X^{*}(s))=0.\label{eq0inverif}
\end{equation}
 From Propositions \ref{qvim1m2} and \ref{qvim1m2m}, we know that
$V(b)-V(B)=K^{-}-c^{-}(b-B)$ and if $\overline{K^{+}}\ge K^{+}$
then $V(0)-V(A)=K^{+}+c^{+}A$.
Thus, we can repeat the arguments in the proof of Theorem \ref{verifineqtheor}
and see that, for $U=U^{*}$, all of the inequalities are tight. As
a result, we obtain 
\begin{equation}
E[e^{-r\tau}V(X(\tau))\1_{\tau<\infty}]-V(x)=-E\sum_{\tau_{i}\le\tau}e^{-r\tau_{i}}[(K^{+}+c^{+}\xi^{+})\1_{\xi_{i}>0}+(K^{-}-c^{-}\xi^{-})\1_{\xi_{i}<0}]=C(x,U^{*}).\label{ineqverfinalver}
\end{equation}
 Because we know that, when $\overline{K^{+}}\ge K^{+}$, we have
$\1_{\tau<\infty}=0$ and, when $\overline{K^{+}}<K^{+}$, the function
$V$ satisfies $V(0)\equiv v(0)=0$, the first term on the left-hand
side of (\ref{ineqverfinalver}) vanishes, and we get \eqref{verfinal}.
\end{proof}

\section{Conclusions}

The optimal policy in this model has several interesting nontrivial
features. The fact that calls should be made only when there is no
possibility of waiting any longer (that is when the reserve reaches
zero) is supported by intuition. However, the qualitative structure 
of the optimal
policy and its dependence on the model parameters are not as obvious.

It turns out that it is always optimal to pay dividends, no matter
what the costs associated with such payments are. However, raising
cash may not be optimal when the initial set-up cost is too high.
Quantity $\overline{K^{+}}$, which determines the threshold for set-up
cost $K^{+}$, such that if the cost is higher than this threshold
then it is optimal to allow ruin, is in itself determined via an auxiliary
problem with a one-sided impulse control. Although there is no closed-form
expression for the quantity $\overline{K^{+}}$, it can be determined
in an algorithmic manner prior to solving the optimal control problem
for the mutual insurance company.

There is one rather curious feature of the optimal solution when $K^{+}=\overline{K^{+}}$.
As our analysis shows, in this case, $S^{*}=0$ and $V=v$, the
same as is the case when $K^{+}>\overline{K^{+}}$. However, from the
construction of the optimal policy, we can see that the two band-type
policy is optimal in this case as well. In this borderline case, we
thus have two optimal policies, one for which $\tau=\infty$ with
the lower band equal to $(0,A)$ and one for which reaching $0$ corresponds
to ruin and for which $\tau<\infty$. This is a rather unique feature
of this particular problem that has not been observed previously.

A natural question arises: what if ruin is explicitly disallowed,
and we must find an optimal policy from among those for which $\tau=\infty$.
As can be seen from our analysis, we find a solution to this problem
for the case of $\overline{K^{+}}\ge K^{+}$. However, when this inequality
does not hold, then of the stochastic control technique and the HJB
equation used in this paper do not work. Another approach should be
developed, as can be seen indirectly in the work of Eisenberg \cite{Eisenberg2011Optimal}
and Eisenberg and Schmidli \cite{Eisenberg2010}, where a similar
(although not identical) problem is considered for the case of a surplus
process modeled via the classical Cramer-Lundberg model. This constitutes
an interesting and challenging problem for future study, the nature
of whose solution is not obvious at this time.

%-----------O

\end{document}